\def\y{\,\vert\,}
\def\etc{\cdots\hspace{-.3mm}}
\def\F{\protect\operatorname{Conf}}
\def\DF{\protect\operatorname{DConf}}
\newtheorem{proposition}{Proposition}[section]
\newtheorem{corollary}[proposition]{Corollary}
\newtheorem{definition}[proposition]{Definition}
\newtheorem{theorem}[proposition]{Theorem}
\newtheorem{remark}[proposition]{Remark}
\newtheorem{example}[proposition]{Example}
\newtheorem{lemma}[proposition]{Lemma}
\begin{document}

\title{An algorithmic discrete gradient field for non-colliding cell-like objects and the topology of pairs of points on skeleta of simplexes}

\author{Emilio J.~Gonz\'alez and Jes\'us Gonz\'alez}

\date{}

\maketitle

\begin{abstract}
For a positive integer $n$ and a finite simplicial complex $K$, we describe an algorithmic procedure constructing a maximal discrete gradient field $W(K,n)$ on Abrams' discretized configuration space $\DF(K,n)$. Computer experimentation shows that the field is generically optimal. We study the field $W(K,n)$ for $n=2$ and $K=\Delta^{m,d}$, the $d$-dimensional skeleton of the $m$-dimensional simplex. In particular, we prove that $\DF(\Delta^{m,d},2)$ is $(\min\{d,m-1\}-1)$-connected, has torsion-free homology and admits a minimal cell structure. We compute the Betti numbers of $\DF(\Delta^{m,d},2)$ and, for certain values of $d$, we prove that $\DF(\Delta^{m,d},2)$ breaks, up to homotopy, as a wedge of (not necessarily equidimensional) spheres.
\end{abstract}

{\small 2020 Mathematics Subject Classification: 55P15, 55R80, 57Q70.}

{\small Keywords and phrases: Discretized configuration spaces, discrete Morse theory.}

\section{Introduction and main results}\label{mainresult}

For a cell complex $X$ and a positive integer $n$, a discrete analogue of the configuration space
\begin{equation}\label{configurationspace}
\F(X,n)=\{(x_1,\ldots,x_n)\in X^n\colon x_i\neq x_j \text{ for } i\neq j\}
\end{equation}
was introduced in Abrams Ph.D.~thesis~\cite{MR2701024}. Abrams' model, denoted here by $\DF(X,n)$, is the subcomplex of the product complex $X^n$ resulting by removing all open cells whose closure intersects the fat diagonal $X^n\setminus\F(X,n)$. In other words, $\DF(X,n)$ is the largest subcomplex of~$X^n$ contained in $\F(X,n)$.

Abrams discrete configuration spaces are particularly interesting objects on their own right, that have proven to be a highly valuable tool for understanding the algebraic topology properties of classifying spaces of graph braid groups, i.e., of spaces~(\ref{configurationspace}) for 1-dimensional complexes $X$ (\cite{MR2949126,MR2833585,MR3426912,MR3000570}). In addition, and from a more practical viewpoint, discrete configuration spaces have certain advantages over their classical counterparts. For instance, consider $n$~autonomous robots moving on a system of tracks forming a graph $G$. Then, by using $\DF(G,n)$ as a model for the corresponding collision-free motion planning problem, we force a \emph{safety} rule, namely, the requirement that the shortest path among any two robots always includes a complete edge of $G$. This allows us to replace the (dimensionless, in principle) notion of ``robot'' by actual trains whose lengths are no larger than the shortest edge in~$G$.

In general, the homotopy type of $\DF(X,n)$ may be different from that of $\F(X,n)$. When $\dim(X)=1$, there are well understood (subdivision-type) conditions assuring that Abrams discrete model captures the homotopy type of its classical counterpart (\ref{configurationspace}), see~\cite[Theorem~2.4]{MR2833585} and \cite{MR3276733}. However, for larger dimensional complexes $X$, no analogous conditions are known implying a potential homotopy equivalence $\F(X,n)\simeq\DF(X,n)$. The situation for $n=2$ and~$X$ an arbitrary \emph{simplicial} complex is exceptional, as it has been known for a long time that $\DF(X,2)$ sits inside the ``deleted product'' $\F(X,2)$ as a strong deformation retract~(see~\cite[Theorem~11.2]{MR113226} for the case of a finite $X$ and, for a general simplicial complex,~\cite[Lemma 2.1]{MR89410}, noticing the fix in~\cite{aronestrickland}). This is particularly appealing, as deleted products are important objects playing a central role, for instance, in the Euclidean embedding problem for complexes~(\cite{MR89410}).

As suggested in~\cite{MR3003699}, a systematic study of discrete configuration spaces could then start by considering the case of simplices, the basic building blocks of complexes. In such a direction, Abrams, Gay and Hower have shown that the discrete configuration space $\DF(\Delta^m,n)$ associated to an $m$-dimensional simplex $\Delta^m$ splits, up to homotopy, as a wedge sum of spheres all of the same dimension. While their techniques are based on classical homotopy theory, the problem of finding a combinatorially flavored argument arises (see~\cite[page~1085]{MR3003699}). In this paper we pave the way for addressing such a challenge (see Theorem~\ref{maintheoremmaximal} below). In particular, for $1\leq d\leq m$, we study, from a purely computational topology perspective, the homotopy type of $\DF(\Delta^{m,d},2)$, where $\Delta^{m,d}$ stands for the $d$-dimensional skeleton of $\Delta^m$. This leads us to a major generalization of the results in~\cite{algorithm1} regarding the algebraic topology of 2-point configuration spaces on complete graphs (i.e., 1-dimensional skeleta $\Delta^{m,1}$).

Much of our interest in the complex $\Delta^{m,d}$ stems from the fact that any finite simplicial complex~$X$ can be thought of as a subcomplex of $\Delta^{m,d}$, where $d=\dim(X)$ and $m$ is one less than the number of vertices of $X$. Thus, Abrams' $\DF(X,2)$ (or $\DF(X,n)$, for that matter) becomes a subcomplex of $\DF(\Delta^{m,d},2)$ (respectively, of $\DF(\Delta^{m,d},n)$). In other words, instead of considering basic building blocks of complexes, i.e., the Abrams-Gay-Hower motivation, we propose to focus on universally large complexes.

Before stating in detail our main contributions, we briefly summarize them. We prove that $\DF(\Delta^{m,d},2)$ has torsion free homology admiting a minimal cell structure in the sense of~\cite[Section 4.C]{MR1867354} ---even in the non-simply-connected case. We compute the Betti numbers of $\DF(\Delta^{m,d},2)$ and show that, for $d\geq m-3$, $\DF(\Delta^{m,d},2)$ is homotopy equivalent to a wedge of spheres. Our calculations suggest that a homotopy decomposition of $\DF(\Delta^{m,d},2)$ as a wedge of spheres holds as long as
\begin{equation}\label{borde}
d>\max\left\{1,\frac{m-2}{2}\right\}.
\end{equation}
Although we do not prove such assertion, we provide indirect evidence in Remark~\ref{evidencia} below. 

\begin{remark}\label{abramsobs}{\em
A homotopy decomposition of $\DF(\Delta^{m,d},2)$ as a wedge of spheres does not hold outside the range in~(\ref{borde}). For instance, as proved in~\cite{algorithm1}, nontrivial cup products exist in the cohomology ring of 2-point configuration spaces of complete graphs (i.e., the case $d=1$ above). Likewise, there are nontrivial cup products in $\DF(\Delta^{m,d},2)$ for $d=\frac{m-2}{2}$, though the situation is much subtler. Indeed, for $d\geq1$,
\begin{equation}\label{familiad}
\DF(\Delta^{2d+2,d},2)
\end{equation}
turns out to be homeomorphic to an orientable closed manifold, so nontrivial cup products hold as part of Poincar\'e duality phenomena. Manifold properties of~(\ref{familiad}) will be the topic of a separate paper, and here we only remark a couple of appealing facts. For starters, Abrams' homeomorphism $\DF(K_5,2)\cong S_6$ (\cite[Example~2.4]{MR2701024}) is just the first instance in the family of manifolds~(\ref{familiad}), where $K_5=\Delta^{4,1}\!$, the complete graph on five vertices, and $S_6$ stands for an orientable closed surface of genus six. In addition, while $S_6$ is a connected sum of six torii, item \emph{(\ref{it4})} in the next theorem shows\footnote{We thank Daciberg L. Gon\c calves for bringing such a fact to our attention.} that (\ref{familiad}) has the homology type of a  connected sum of $\binom{2d+2}{d+1}$ copies of $S^d\times S^d$.
}\end{remark}

Our main theoretical result is spelled out next.

\begin{theorem}\label{maintheoremlarge}
For $1\leq d\leq m\geq2$, $\DF(\Delta^{m,d},2)$ is
\begin{itemize}
\item a path-connected space with torsion free homology, which is
\item homotopy equivalent to a  cell complex having as many cells in a given dimension~$i$ as the $i$-th Betti number $\beta_i(m,d)$ of \,$\DF(\Delta^{m,d},2)$.
\end{itemize}
Specifically, setting $s_{m,d}:=\min\{d,m-1\}$ and $\beta_{m,d}:=\beta_{s_{m,d}}(m,d)$, we have:
\begin{enumerate}[(I)]
\item\label{it1} $\DF(\Delta^{m,d},2)$ is $(s_{m,d}-1)$-connected.
\item\label{it2} For $d\in\{m-1,m\}$, $$\DF(\Delta^{m,d},2)\simeq S^{m-1}.$$
\item\label{it3} For $d=m-2$,
\begin{equation}\label{dm2}
\DF(\Delta^{m,m-2},2)\simeq \bigvee_{\beta_{m,m-2}}S^{m-2},
\end{equation}
where $\beta_{m,m-2}=2m+1$.
\item\label{it4} For $\frac{m-2}2\leq d\leq m-3$ (so that $m\geq4$),
\begin{equation}\label{untop}
\DF(\Delta^{m,d},2)\simeq \left(\hspace{.5mm}\bigvee_{\beta_{m,d}}S^{d}\right)\cup_{\alpha_{m,d}} e^{m-2},
\end{equation}
where $\beta_{m,d}=2\binom{m}{\,d+1\,}$. Furthermore, the attaching map $\alpha_{m,d}\colon S^{m-3}\to\bigvee_{\beta_{m,d}}S^{d}$ is null-homotopic provided $1<d=m-3$, so that
\begin{equation}\label{dm3}
\DF(\Delta^{m,m-3},2)\simeq \left(\hspace{.5mm}\bigvee_{m(m-1)}S^{m-3}\right)\vee S^{m-2}
\end{equation}
for $m\geq5$.

\item\label{it5} For $d<\frac{m-2}2$,
\begin{equation}\label{mutop}
\DF(\Delta^{m,d},2)\simeq \left(\bigvee_{\beta_{m,d}}S^{d}\right)\cup\left(\bigcup_{\beta_{2d}(m,d)}e^{2d}\right),
\end{equation}
where $\beta(m,d)=2\binom{m}{\,d+1\,}$ and
$$\beta_{2d}(m,d)=\binom{m}{\,d+1\,}\binom{\,m-d-1\,}{d+1}-\hspace{-.3mm}\sum\limits_{b=m-d\;}^{m}\;\sum\limits_{a=b-d-1}^{b-1}\hspace{-.5mm}\binom{a-1}{\,b+d-m\,}\binom{\,a-b+m-d-1\,}{d+1-b+a}.$$
\end{enumerate}
\end{theorem}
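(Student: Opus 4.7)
The plan is to extract every statement of the theorem from the discrete Morse structure on $\DF(\Delta^{m,d},2)$ afforded by the gradient field $W(\Delta^{m,d},2)$ constructed earlier in the paper. Since such a field produces a CW model homotopy equivalent to $\DF(\Delta^{m,d},2)$ with one cell in dimension $k$ per $W$-critical cell of dimension $k$, the whole theorem reduces to a careful combinatorial enumeration of critical cells, together with an analysis of the associated Morse boundary and attaching maps.

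First I would verify that $W(\Delta^{m,d},2)$ has exactly one critical $0$-cell and no critical cells in dimensions $1,\ldots,s_{m,d}-1$, yielding path-connectedness and assertion (I). Next I would show that the remaining critical cells sit in dimension $s_{m,d}$ together with $\min\{2d,m-2\}$, producing the counts claimed in each part: a single critical $(m-1)$-cell when $d\in\{m-1,m\}$; $2m+1$ critical $(m-2)$-cells when $d=m-2$; $2\binom{m}{d+1}$ critical $d$-cells and one critical $(m-2)$-cell in the regime of (IV); and $2\binom{m}{d+1}$ critical $d$-cells together with $\beta_{2d}(m,d)$ critical $2d$-cells in the regime of (V). Whenever the two positive critical dimensions differ by at least two, the Morse boundary vanishes automatically; this delivers torsion-freeness, the minimal CW structure, and the Betti-number identifications at a stroke. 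In the two boundary cases $d=m-3$ in part (IV) and $d=1$ in part (V), where the critical dimensions become consecutive, one needs the additional verification that the $W$-paths between them cancel in pairs, so that the Morse boundary is zero regardless.

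The wedge decompositions then follow by cellular reasoning applied to the Morse model. In (II) and (III) the model has nontrivial cells only in dimensions $0$ and one other, so the attaching maps factor through the basepoint and the model is literally a wedge. In (IV) the Morse model immediately realizes~(\ref{untop}) as a single $(m-2)$-cell glued onto a $(d-1)$-connected wedge $\bigvee_{\beta_{m,d}} S^d$ by some $\alpha_{m,d}\colon S^{m-3}\to\bigvee_{\beta_{m,d}}S^d$. For $1<d=m-3$ the vanishing Morse boundary shows that the homology class of $\alpha_{m,d}$ in $H_{m-3}(\bigvee S^d)$ is zero; since the wedge is $(d-1)$-connected with $d\geq 2$, the Hurewicz isomorphism $\pi_d\cong H_d$ (applied at the dimension $m-3=d$) forces $\alpha_{m,d}$ to be null-homotopic, giving~(\ref{dm3}). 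In (V) no further simplification occurs because the Whitehead-product content of $\pi_{2d-1}(\bigvee S^d)$ is invisible to the vanishing Morse boundary, consistent with the nontrivial cup-products highlighted in Remark~\ref{abramsobs}.

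I expect the main obstacle to be the combinatorial accounting of the critical cells, particularly in the regime of (V) where $\beta_{2d}(m,d)$ is expressed as a leading product $\binom{m}{d+1}\binom{m-d-1}{d+1}$ minus an intricate double sum indexed by pairs $(a,b)$. Each subtracted term must be identified with a specific family of top-dimensional cells of $\DF(\Delta^{m,d},2)$ that are absorbed into $W$-matched pairs rather than remaining critical, and converting the algorithmic description of $W(\Delta^{m,d},2)$ into this closed expression is the technical heart of the proof. A secondary, milder challenge is the vanishing of the Morse boundary in the two boundary cases above, which should follow from a direct inspection of the $W$-paths permitted between the two consecutive critical dimensions.
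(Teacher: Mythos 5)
Your proposal follows the same route as the paper: classify critical cells of $W(\Delta^{m,d},2)$ via Corollary~\ref{celdascriticas}, count them (Propositions~\ref{rangoestable} and~\ref{cuenta2d}), invoke Theorem~\ref{ftdmt} for the CW model, use sparseness to kill the Morse differential in the generic cases, and use Hurewicz to pass from the vanishing Morse boundary to null-homotopy of $\alpha_{m,d}$ when $1<d=m-3$. Two calibration points worth flagging. First, what you call a ``secondary, milder challenge'' (verifying that gradient paths cancel when $d=m-3$) is in fact the entire content of Section~\ref{homtopviamorse}: Proposition~\ref{cruxofthematter} requires a careful backward construction of lower gradient paths from the top critical cell, controlled by Lemmas~\ref{down} and~\ref{unica}, and the sign bookkeeping is delicate; it is not lighter than the enumerative part. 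Second, the paper does not run your gradient-path cancellation argument in the two genuinely nonsparse subcases with $d=1$: the case $1=d=m-3$ (i.e.\ $\DF(K_5,2)$) is settled by appealing to Abrams' homeomorphism with the genus-six surface in Remark~\ref{abramsobs}, and the case $d=1$ in item~(\ref{it5}) is settled by citing Farber--Hanbury's Betti-number formulas (Proposition~\ref{Betticompletegraphs}), since the Hurewicz step you use requires $d\geq 2$. A direct Morse-boundary computation for $d=1$ should also work, but it would be new work not present in the paper.
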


The easy and well-known situation for $d\in\{m,m-1\}$ in item \emph{(\ref{it2})} is included for completeness and comparison purposes. Note that~(\ref{dm2}) is formally identical to~(\ref{dm3}), as in both cases the total number of spheres in the wedge sum is $2\binom{m}{\hspace{.3mm}d+1\hspace{.2mm}}+1$, except that all spheres happen to be of the same dimension when $d=m-2$. Note also that, for $d=m-2=1$, item \emph{(\ref{it3})} recovers the fact in~\cite[Corollary~4.2]{algorithm1} that the configuration space of two ordered points in $\Delta^{3,1}=K_4$, the complete graph on four vertices, is homotopy equivalent to a wedge of seven circles.

\begin{remark}\label{evidencia}{\em
We have already noted (Remark~\ref{abramsobs}) that the attaching map $\alpha_{m,d}$ in~\emph{(\ref{it4})} is essential (and rather interesting) for $d=\frac{m-2}{2}$, but null-homotopic for $1<d=m-3$. Additionally, in the range 
\begin{equation}\label{rangointeresante}
\frac{m-2}{2}<d<m-3,
\end{equation}
$\alpha_{m,d}$ is a stable map, so it is forced to be null-homotopic whenever $m-d-3\in\{4,5,12,61\}$, i.e., when the relevant stable stem is known to vanish (\cite{MR0143217,MR3702672}). It would be interesting to know if there are further values of $d$ in the range~(\ref{rangointeresante}) for which $\alpha_{m,d}$ is null-homotopic, so that~(\ref{untop}) becomes a wedge sum of spheres. On the other hand, we believe that nonzero cup products will always exist in the case of~(\ref{mutop}) ---checking such a situation is within reach with the tools in this paper---, so that some component of the attaching map in~(\ref{mutop}) would not be null-homotopic.
}\end{remark}

\begin{remark}\label{superficie}{\em
The main result in~\cite{MR3003699}, namely, the fact that $\DF(\Delta^m,n)$ is homotopy equivalent to a wedge of spheres of dimension $m-n+1$, suggests that (parts of) Theorem~\ref{maintheoremlarge} would admit suitable extensions for $\DF(\Delta^{m,d},n)$ when $n\geq2$. 
}\end{remark}


We prove Theorem~\ref{maintheoremlarge} by using Forman's discrete Morse theory techniques. In fact, as a second main contribution in this work, we describe an algorithmic construction of an efficient discrete gradient field $W(X,n)$ on $\DF(X,n)$, for any finite simplicial complex $X$ and any $n\geq2$.

\begin{theorem}\label{maintheoremmaximal}
The gradient field $W(X,n)$ is maximal in the sense that no $(p-1)$-dimensional face and no $(p+1)$-dimensional coface of a critical $p$-dimensional face is critical.
\end{theorem}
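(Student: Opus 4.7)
Maximality is a local statement and our plan is to verify it directly from the rule defining $W(X,n)$. Because the field is constructed algorithmically, the argument will consist in analyzing how the algorithm decides whether a cell of $\DF(X,n)$ is matched or left critical, and in tracing what this decision forces on the neighbors of a critical cell in the face poset.

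A cell of $\DF(X,n)$ is a tuple $\sigma=(\sigma_1,\ldots,\sigma_n)$ of simplices of $X$ with pairwise disjoint vertex sets, and two cells are adjacent in the Hasse diagram precisely when they differ in a single coordinate by the insertion or deletion of a single vertex (subject to the disjointness condition). We expect the construction of $W(X,n)$ to assign to each cell $\sigma$ a \emph{canonical candidate partner} $\Phi(\sigma)$ defined by a combinatorial priority rule over coordinates and over a chosen linear order on the vertices of $X$: one runs through the ordered list of possible single-vertex moves on $\sigma$, and $\sigma$ is paired with the result of the first move that keeps the cell in $\DF(X,n)$. Cells for which no such move exists are declared critical. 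The structural fact supporting the theorem will be the \emph{involutivity} of $\Phi$: whenever the partner assigned to $\sigma$ is $\tau$, the partner assigned to $\tau$ is $\sigma$, so that criticality of a cell is equivalent to failure of every move in the priority list.

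Granting involutivity, the argument runs as follows. Let $\sigma$ be critical of dimension $p$, and let $\tau$ be a $(p+1)$-coface of $\sigma$. The deletion move from $\tau$ to $\sigma$ is admissible, since $\sigma \in \DF(X,n)$. If $\Phi(\tau)=\sigma$, involutivity would place $(\sigma,\tau)$ in $W(X,n)$, contradicting the criticality of $\sigma$; therefore some earlier deletion in the priority list applied to $\tau$ must already be admissible, producing a cell $\sigma'\neq\sigma$ with $(\sigma',\tau)\in W(X,n)$, and so $\tau$ is not critical. A symmetric analysis, replacing deletions on $\tau$ by insertions on a $(p-1)$-face $\rho$ of $\sigma$ (the insertion move from $\rho$ to $\sigma$ being automatically admissible because $\sigma \in \DF(X,n)$), shows that $\rho$ is also matched.

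The main obstacle lies in establishing involutivity and, more substantively, in verifying that the priority rule interacts correctly with the fat-diagonal restriction. A careful case analysis is needed to track, across a face/coface move, which coordinate and which vertex are selected as the ``first admissible'' ones; the subtle point is that the obstruction blocking a higher-priority move on $\sigma$ (namely some other coordinate $\sigma_j$ already occupying the candidate vertex) must persist, in a controlled way, when the move is examined from the partner side. Once this combinatorial bookkeeping is in place, the reasoning sketched above delivers the conclusion of Theorem~\ref{maintheoremmaximal}.
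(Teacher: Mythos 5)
Your proposal reframes the construction as a cell-local rule: assign to each cell $\sigma$ a canonical candidate partner $\Phi(\sigma)$ by scanning its single-vertex moves in a fixed priority order, define $W(X,n)$ as the set of pairs $(\sigma,\Phi(\sigma))$, and then prove $\Phi$ involutive. This does not describe the algorithm. The field $W(X,n)$ is built by a \emph{greedy, sequential} scan of candidate pairings $(\alpha,\alpha+_ru)$ in a global order (decreasing dimension $p$, then decreasing coordinate $r$, then decreasing vertex $u$, then increasing lexicographic $\alpha$), and a pairing is committed only when \emph{both} cells are still unclaimed at the moment they are assessed, so the partner of a cell depends on the run's history, not on the cell alone. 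Already in Example~\ref{ejk3} ($\DF(K_3,2)$) the cell $(1,3)$ ends up paired with $(12,3)$, whereas the first admissible insertion on $(1,3)$ in the algorithm's own priority order would yield $(1,23)$ --- a cell that was claimed earlier by $(1,2)$. So the involutivity you flag as the main obstacle is not an unfinished step; it simply fails for the rule you propose, and no case analysis on the fat-diagonal constraint will rescue it.

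The argument in the paper is much shorter and bypasses any such rule. ``Critical'' means, by construction, ``never removed from the availability set $A^p$.'' Suppose for contradiction that $\sigma^{(p)}$ is critical and so is a coface $\gamma=\sigma+_ru$. Then $\sigma\in A^p$ and $\gamma\in A^{p+1}$ throughout the run; in particular $A^p$ and $A^{p+1}$ are never empty, so process $\mathcal{P}(p)$ runs, and the instruction $\mathcal{P}(p,r,u,\sigma)$ is executed with $\sigma$ available. At that instant $\gamma$ is also available, so the algorithm adds $\sigma\nearrow\gamma$ to $W$ and removes both from the availability sets --- contradicting criticality. Symmetrically, if $\rho^{(p-1)}$ is a face with $\sigma=\rho+_ru$ and both are critical, then $\mathcal{P}(p-1,r,u,\rho)$ is executed with both $\rho$ and $\sigma$ available, forcing $\rho\nearrow\sigma$, again a contradiction. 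The only input is that the algorithm assesses every candidate pairing whose lower cell is still available; no involutivity and no coordinate-by-coordinate bookkeeping are needed. (Acyclicity, which is what makes $W(X,n)$ a \emph{gradient} field, is a separate matter handled by Proposition~\ref{aciclicidad} via Lemma~\ref{redforzado}.)
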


The gradient field $W(X,n)$ is inspired by the construction in~\cite{algorithm1} of a maximal gradient field $W(X)$ on any finite simplicial complex $X$. As described in~\cite{algorithm1}, $W(X)$ can be constructed through two different algorithms $\mathcal{A}$ and $\overline{\mathcal{A}}$, where the former one is suitable for studying theoretical properties of $W(X)$, while the latter one leads to a fast computer implementation. Our construction of $W(X,n)$ is a blend of the two algorithms $\mathcal{A}$ and $\overline{\mathcal{A}}$. Indeed, while $W(X,1)=W(X)$ is the output of $\overline{\mathcal{A}}$, the construction of $W(X,n)$ for $n>1$ follows a strategy similar to the one supporting~$\mathcal{A}$.

The construction of $W(X,n)$ depends on the lexicographic order of faces of $X$ induced by a given total ordering of the vertices of $X$. Yet, the actual constructing process and, therefore, the resulting gradient field, are far more elaborate than the lexicographic gradient field introduced in \cite{MR2770552} to study random Vietoris-Rips complexes above the thermodynamic limit ---later extended to the filtered realm in \cite{MR4298669}. Indeed, in the notation of Section \ref{subsectiondmthry}, all Morse pairings $\alpha\nearrow\beta$ in Kahle's construction have the form $\beta=\alpha\cup\{v\}$ where $v$ is a vertex smaller than any of the vertices of $\alpha$. The fact that such a situation seldom holds in our construction leads to the efficiency property of $W(X,n)$ formalized in Theorem \ref{maintheoremmaximal} and exemplified in Remark \ref{raaghrist} below as well as by the optimality of $W(\Delta^{m,d},2)$.

Most of the assertions in Theorem~\ref{maintheoremlarge} depend on the fact that $W(\Delta^{m,d},2)$ is optimal. The~full strength of Theorem~\ref{maintheoremlarge} then comes from the achievement of two major goals:
\begin{itemize}
\item[(a)] We give an explicit classification of the resulting structure of $\DF(\Delta^{m,d},2)$ into collapsible, redundant and critical cells.
\item[(b)] We describe the dynamics of the resulting gradient paths. 
\end{itemize}

The first task generalizes (to the higher dimensional complexes~$\Delta^{m,d}$) Farley-Sabalka's classification of critical, redundant and collapsible cells in Abrams' discrete homotopy model for graph configuration spaces (see~\cite[Theorem~3.6]{MR2171804} for the unordered case, and~\cite[Theorem~3.2]{MR4356250} for the ordered case). Likewise, the second task generalizes the corresponding goal that lead in~\cite{tere} to a complete understanding of the cohomology ring of full braid groups on trees. Explicit proof details of both tasks are necessarily technical, and explicit computer outputs played a central role in assembling our general arguments.

As a way of illustration, we highlight:

\begin{theorem}[See Corollary~\ref{celdascriticas}]\label{maintheoremclass}
For integers $p$ and $q$, let $\langle p,q\rangle$ stand for the set of integers $\{p,p+1,\ldots,q-1,q\}$. Cells in $\DF(\Delta^{m,d},2)$ are in one-to-one correspondence with ordered pairs $(A,B)$, where $A$ and $B$ are disjoint subsets of $\{0,1,\ldots,m\}$, each of cardinality at most $d+1$, i.e., $|A|\leq d+1\geq|B|$. For $1\leq d\leq m-2$, the cell corresponding to $(A,B)$ is critical if and only if one of the following seven mutually exclusive conditions hold:

\medskip
In dimension 0:
\begin{enumerate}
\item $(A,B)=(\{m-1\},\{m\})$.
\end{enumerate}

In dimension $d$:
\begin{enumerate}\addtocounter{enumi}{1}
\item $B=\{b\}$ with $b\geq1$, $b-1\not\in A$, $\langle b+1,m\rangle\subseteq A$ and $|A|=d+1$.
\item $A=\{m\}$, $\max B<m-1$ and $|B|=d+1$.
\item $\max B=m-1$, $\langle0,m-1\rangle\neq B$, $A=\{\max(\langle0,m-1\rangle-B)\}$, $|B|=d+1$.
\end{enumerate}

In dimension $2d$:
\begin{enumerate}\addtocounter{enumi}{4}
\item $\max A<m$, $\langle1+\max B,m-1\rangle\not\subseteq A$ and $|A|=|B|=d+1$ and $\,2(d+1)\leq m$.
\item $\max(A\cup B)<m$, $\langle1+\max B,m-1\rangle\subseteq A$, $\langle0,\max B\rangle\neq B$, $\max(\langle0,\max B\rangle-B)\not\in A$, $|A|=|B|=d+1$ and $\,2(d+1)<m$.
\end{enumerate}

In dimension $m-2$:
\begin{enumerate}\addtocounter{enumi}{6}
\item $(A,B)=(\langle d+1,m-1\rangle,\langle0,d\rangle)$ and $\,m\leq2(d+1)$.
\end{enumerate}
\end{theorem}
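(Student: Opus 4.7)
The classification is a direct readout of the explicit pairing rules of the gradient field $W(\Delta^{m,d},2)$ built by the algorithm, once we have set up the bijection between cells of $\DF(\Delta^{m,d},2)$ and ordered pairs $(A,B)$ of disjoint subsets of $\{0,\dots,m\}$ with $|A|,|B|\le d+1$ (the cell $(A,B)$ has dimension $|A|+|B|-2$, since it is a product of simplices of dimensions $|A|-1$ and $|B|-1$). The plan is to fix the standard vertex ordering $0<1<\cdots<m$ of $\Delta^{m,d}$, read off the induced lexicographic order on the cells of $\DF(\Delta^{m,d},2)$, and then, for each cell $(A,B)$, decide whether the algorithm pairs it with a codimension-one face (making it redundant), with a codimension-one coface (making it collapsible), or leaves it unpaired (critical).

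First I would isolate, from the definition of the field, the local combinatorial rules that drive the pairing decisions: given a cell $(A,B)$, which of the faces $(A\setminus\{a\},B)$ and $(A,B\setminus\{b\})$ (for $a\in A$, $b\in B$) and which of the cofaces $(A\cup\{v\},B)$ and $(A,B\cup\{v\})$ (for $v\notin A\cup B$, respecting the dimension bound $|A|,|B|\le d+1$) the algorithm selects. These rules turn out to be expressible in terms of a few elementary predicates: whether the top element of $A\cup B$ is in $A$ or in $B$, whether the interval $\langle 1+\max B,m-1\rangle$ is contained in $A$, whether $\max B=m-1$, whether $A$ or $B$ has reached the maximal size $d+1$, and the position of $\max(\langle 0,\max B\rangle\setminus B)$ relative to $A$. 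Once these rules are on the table, the classification becomes a finite case analysis: in each combinatorial configuration, the algorithm either produces a pairing partner or exhausts the possibilities and declares the cell critical.

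The dimension restriction is then an a posteriori observation. By Theorem~\ref{maintheoremmaximal}, critical cells in consecutive dimensions cannot be incident, which already forces critical dimensions to be ``spaced out''. Combining this with the fact (from Theorem~\ref{maintheoremlarge}\,(\ref{it1})) that $\DF(\Delta^{m,d},2)$ is $(s_{m,d}-1)$-connected, so that dimensions $1,\ldots,s_{m,d}-1$ admit no critical cells, and with the dimensional range $0\le|A|+|B|-2\le 2d$ of cells of $\DF(\Delta^{m,d},2)$ together with the possible equality $2d=m-2$ when $2(d+1)=m$, one pins down the candidate critical dimensions as $0$, $d$, $2d$, and $m-2$. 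Within each of these dimensions I would then enumerate the combinatorial shapes of $(A,B)$ using the rules above: a single unmatched $0$-cell at the lex-maximal position $(\{m-1\},\{m\})$; three families in dimension $d$ (one with $|A|=d+1$ and the other with the single element of $B$ playing the role of the ``blocker'', and two more with $|B|=d+1$ split according to whether the ``escape vertex'' $m$ is available); two families in dimension $2d$ distinguished by whether $\langle 1+\max B,m-1\rangle\subseteq A$ holds; and, in the boundary range $2(d+1)\ge m$, the single top-dimensional critical cell $(\langle d+1,m-1\rangle,\langle 0,d\rangle)$.

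The main obstacle, as anticipated, is not the dimension count but the verification that the seven combinatorial conditions exhaust the unpaired cells and are mutually exclusive. This amounts to showing that in every configuration not on the list the algorithm succeeds in matching $(A,B)$ either upward or downward, which requires carefully tracking how the algorithm's priorities resolve simultaneous candidate faces and cofaces. I would handle this by reducing each negation of a listed condition to the existence of a specific vertex $v$ (either to be removed from $A\cup B$ or to be added) whose choice the algorithm is guaranteed to make, a verification in which the computer-assisted evidence produced in the experimental part of the paper serves as an invaluable consistency check. Mutual exclusivity then follows from comparing the dimensions of the configurations and, within a common dimension, from inspecting the defining conditions on $\max B$, $\max(A\cup B)$, and the interval inclusions.
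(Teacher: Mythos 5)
Your high-level strategy—derive explicit pairing rules from the algorithm and then obtain the critical cells by a finite case analysis over the shape of $(A,B)$—is essentially the paper's route (Theorems~\ref{teorema1}--\ref{teorema2}, Corollary~\ref{celdasredundantes}, Corollary~\ref{celdascriticas}, Proposition~\ref{lascriticas}). However, your proposed argument for pinning down the candidate critical dimensions is both circular and logically flawed. It is circular because Theorem~\ref{maintheoremlarge}~\emph{(\ref{it1})}, the $(s_{m,d}-1)$-connectedness of $\DF(\Delta^{m,d},2)$, is itself \emph{proved} in the paper as a consequence of this classification together with Theorem~\ref{ftdmt}; you cannot invoke it as an input to the classification. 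It is also invalid on its own terms: Theorem~\ref{maintheoremmaximal} only says a critical $p$-cell has no critical codimension-one face or coface, which does not imply that critical cells avoid consecutive dimensions (nonincident critical cells of dimensions $p$ and $p+1$ are perfectly allowed); and $(s-1)$-connectedness of a space does not by itself preclude critical cells of a gradient field in dimensions $1,\ldots,s-1$ (they could cancel in the Morse complex). In the paper, the restricted dimensions $0$, $d$, $2d$, $m-2$ are not hypotheses but \emph{outputs}, read directly from the explicit descriptions $(i)$--$(viii)$ in Corollary~\ref{celdascriticas}; you should drop the connectivity/spacing argument entirely and instead compute $\dim(A,B)=|A|+|B|-2$ in each of the seven cases.

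A second, more minor, missing ingredient is the completeness step: your plan identifies which pairings the algorithm prefers, but you also need to prove that the proposed list of directed edges is exhaustive, i.e., that no further pairings occur. The paper does this in Proposition~\ref{lascriticas} with an indirect argument—any missing edge would have to join two cells already on the ``unmatched'' list, so it suffices to check that every codimension-one face of each candidate critical cell is already matched by one of the listed rules. Your proposal gestures at this (``reducing each negation of a listed condition to the existence of a specific vertex whose choice the algorithm is guaranteed to make''), which is roughly the right idea, but it should be made explicit that the verification is finite and self-contained, and that computer experimentation is a consistency check rather than part of the proof.
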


Here is a brief roadmap for the paper. The gradient field $W(X,n)$ is constructed in Section~\ref{optgrafie}, where its maximality property (Theorem~\ref{maintheoremmaximal}) is proved. The characterization of collapsible (Theorems~\ref{teorema1} and~\ref{teorema2}), redundant (Corollary~\ref{celdasredundantes}) and critical cells (Corollary~\ref{celdascriticas}), accomplished in Section~\ref{crigradflo}, leads to a proof of most assertions in Theorem~\ref{maintheoremlarge}. The second assertion in item~\emph{(\ref{it4})} of Theorem~\ref{maintheoremlarge}, i.e., the fact that the attaching map for the top cell in the critical CW model for $\DF(\Delta^{m,m-3},2)$ is homotopically trivial for $m\geq5$, requires the close analysis in Section~\ref{homtopviamorse} of the corresponding gradient flow. In all cases, the optimality of $W(\Delta^{m,d},2)$ and the minimality of the corresponding critical CW structure follow from the triviality of the corresponding Morse differential.

\begin{remark}\label{raaghrist}{\em
The $n$-th symmetric group $\Sigma_n$ acts cellularly on $\DF(X,n)$ by permuting coordinates. The fact that our gradient field $W(X,n)$ is \emph{not} $\Sigma_n$-equivariant should be interpreted as one of its key features. For instance, for $X=\Gamma$ a graph, $\Sigma_n$-equivariantness prevents Farley-Sabalka's gradient field on $\DF(\Gamma,n)$ to be optimal, as the number of critical cells in any given dimension is always divisible by~$n!$ (After all, Farley-Sabalka's gradient field was introduced as a tool to study \emph{full} graph braid groups.) In contrast, $W(\Gamma,n)$ happens to be optimal in many instances, including complete graphs~$\Delta^{m,1}$. As a result, (co)homological calculations for $\DF(\Gamma,n)$ are far more accessible via our gradient field than via Farley-Sabalka's. In particular, just as Farley-Sabalka's gradient field has been a key tool for characterizing in~\cite{MR2355034,MR2833585,MR3426912} \emph{full} braid groups that are right-angled Artin groups (RAAG's), we believe that $W(\Gamma,n)$ will lay an effective strategy in an eventual RAAG-characterization in the case of \emph{pure} graph braid groups ---Ghrist's original question in~\cite{MR1873106}.
}\end{remark}

All in all, the use of our discrete gradient field $W(X,n)$ opens up a gateway for a systematic study, both theoretical as well as computer-assisted, of the algebraic topology properties of general discrete configuration spaces $\DF(X,n)$, their eventual comparison to $\F(X,n)$, and potential applications (e.g., the one noted at the end of Remark~\ref{raaghrist}).

\section{Review of discrete Morse theory}\label{subsectiondmthry}
We start by reviewing the basics on Forman's discrete Morse theory for the type of cell complexes we are interested in. The reader is referred to~\cite{MR1358614,MR1926850} for details.

\medskip
Let $X$ be a finite regular CW complex, and let $F$ denote the set of cells of $X$ (also called ``faces'', even if $X$ fails to be simplicial). We write $\alpha^{(p)}$ to indicate that $\alpha\in F$ is $p$-dimensional. The directed Hasse diagram $H$ of $X$ is the directed graph with vertex set $F$ and directed edges $$\alpha^{(p+1)}\searrow\beta^{(p)}$$ whenever $\beta\subset\alpha$ with $p\geq0$. Recall that the degree of a vertex $v$ in a directed graph is the number of outgoing edges from $v$ plus the number of ingoing edges to $v$.

\begin{definition}
\begin{enumerate}
\item In the context above, a directed subgraph $W$ of $H$ is called a \emph{discrete field} provided all vertices of $W$ have degree~1. The directed graph obtained from $H$ by reversing the orientation of all arrows in $W$ defines the \emph{$W$-modified Hasse diagram} $H_W$. We use the notation $\beta^{(p)}\nearrow\alpha^{(p+1)}$ to indicate the edge in $H_W$ resulting from reversing the orientation of an edge $\alpha^{(p+1)} \searrow \beta^{(p)}$ in $W$. In such a case,~$\alpha$ is said to be \emph{collapsible} while $\beta$ is said to be \emph{redundant.} Any other vertex of $H_W$, i.e.~those not belonging to $W$, are called \emph{critical.}
\item A \emph{gradient path} $\gamma$ is a chain
$$\alpha_0\nearrow\beta_1\searrow\alpha_1\nearrow\cdots\nearrow \beta_k\searrow\alpha_k$$
of alternating up-going/down-going directed edges in $H_W$ with $k>0$. We say that $\gamma$ is a cycle provided $\alpha_0=\alpha_k$. (By construction, $\alpha_i\neq\alpha_{i+1}$ for $0\leq i<k$, so the cycle condition is possible only with $k>1$.) The discrete field~$W$ is said to be \emph{gradient} if no gradient path is a cycle.
\item A \emph{lower} gradient path $\gamma$ is a chain
$$\alpha_0\searrow\beta_1\nearrow\alpha_1\searrow\cdots\searrow \beta_k\nearrow\alpha_k$$
of alternating down-going/up-going directed edges in $H_W$ with $k>0$.
\end{enumerate} 
\end{definition}

A discrete gradient field $W$ on a finite regular CW complex $X$ should be understood as an organized set of instructions that allow us to simplify the cell structure of $X$ without altering its homotopy type. The simplification is achieved by extrapolating, from the simplicial setting, the concept of ``collapse''. Assume that $\tau^{(p)}$ is a free face of $\sigma^{(p+1)}$ in $X$, i.e., $\sigma$ is the only $(p+1)$-dimensional face of $X$ containing $\tau$. We can then squeeze $\sigma$ towards $\partial\sigma-\text{Int}(\tau)$ by pushing $\tau$ through $\text{Int}(\sigma)$. See Figure~\ref{apachurramiento}, where a linear squeezing is depicted at the level of domains of characteristic maps.
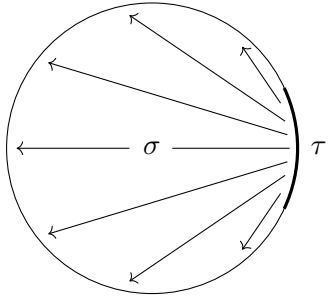
\begin{figure}
\centering
\begin{tikzpicture}[x=.6cm,y=.6cm]
\draw (0,0) ellipse (55pt and 55pt);
\draw(3.05,0)--(.45,0);
\draw[->](-.5,0)--(-3,0);
\draw[->](3,.3)--(-2.3,1.9);
\draw[->](3,-.3)--(-2.3,-1.9);
\draw[->](2.95,.6)--(-.5,2.95);
\draw[->](2.95,-.6)--(-.5,-2.95);
\draw[->](2.85,1)--(2,2.25);
\draw[->](2.85,-1)--(2,-2.25);
\node at (0,3.7) {}; 
\node at (0,0) {$\sigma$}; 
\node at (3.7,0) {$\tau$};
\draw[very thick] (2.93,-1.34) arc (-25:25:1.9cm);
\end{tikzpicture}
\caption{Squeezing of $\sigma$ by pushing through its free face $\tau$ in the regular complex $X$}
\label{apachurramiento}
\end{figure}
Here ``Int'' stands for the interior of a simplex. Since we do not want to change the homotopy type of $X$, the boundary of $\tau$ must stay fixed throughout the squeezing process. On the other hand, at the level of the characteristic-map domain, the squeezing is maximal at the barycenter of~$\tau$. Once the squeezing is complete, we get a new complex, which is homotopy equivalent to $X$, and with the same face structure as that of $X$, except that $\tau$ and $\sigma$ have now been removed. Although a pair of cells $(\tau,\sigma)$ as above would typically be the start of a largest possible gradient path in $X$, the squeezing process can be performed even if $\tau$ is not a free face of~$\sigma$. In such a case, and in order to avoid tearing apart the fabric of $X$, we have to elongate all other cells having $\tau$ as a face, so to match the squeezing into $\sigma$. The process can then be iterated following the instructions provided by each maximal gradient path
$$\alpha_0\nearrow\beta_1\searrow\alpha_1\nearrow\cdots\nearrow \beta_k\searrow\alpha_k,$$ pushing in turn each $\alpha_i$ so to squeeze the corresponding $\beta_{i+1}$. As the process continues, all pushable and all squeezable faces get deformed establishing a flow leading to critical faces. At the end, we get a cell complex which (1) is homotopy equivalent to $X$ and (2) has a $p$-cell for each critical $p$-cell in $X$. Formally:

\begin{theorem}[{\cite[Corollary~3.5]{MR1358614}}]\label{ftdmt}
A discrete gradient field $W$ on a finite regular CW complex~$X$ determines a homotopy equivalence $X\simeq Y$, where the CW complex $Y$ has a $p$-dimensional cell for each $p$-dimensional critical face of $X$.
\end{theorem}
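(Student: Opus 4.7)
The plan is to build the promised cell complex $Y$ by turning the combinatorial data of $W$ into a sequence of elementary collapses along matched pairs, interleaved with cell attachments at the critical faces. First I would convert $W$ into an equivalent discrete Morse function $f\colon F\to\mathbb{R}$, chosen so that $f$ strictly decreases along every non-matching edge $\alpha\searrow\beta$ of the Hasse diagram and takes appropriately nearby values on the two members of each matched pair $\beta\nearrow\alpha$. Such an $f$ exists precisely because $W$ is gradient: the transitive closure of the reachability relation on $H_W$, after identifying each matched pair to a single point, is a strict partial order, and any linear extension provides the required function.

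Next I would analyze the sublevel filtration $X_c=\bigcup\{\overline{\sigma}\colon f(\sigma)\leq c\}$ and establish the two standard deformation lemmas. If the half-open interval $(a,b]$ contains no critical values of $f$, then the inclusion $X_a\hookrightarrow X_b$ is a homotopy equivalence: passing from $X_a$ to $X_b$ amounts to adjoining matched pairs $(\beta,\alpha)$ in an order in which, at each step, $\beta$ is a free face of $\alpha$ inside the current subcomplex, and every such adjunction is reversed by an elementary collapse. Acyclicity of $W$ is essential here, because it guarantees that at every stage of the processing some still-unmatched pair is collapsible; otherwise the obstructions would close up into a chain realising a gradient-path cycle, contradicting the hypothesis.

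For a critical value $c$ carrying a single critical $p$-cell $\sigma^{(p)}$, I would show $X_b\simeq X_a\cup_\varphi e^p$, with attaching map $\varphi\colon S^{p-1}\to X_a$ determined by the characteristic map of $\sigma$ composed with the deformation retraction of $\overline{\sigma}\setminus\mathrm{Int}(\sigma)$ onto the portion of $\partial\sigma$ already sitting in $X_a$. The rough geometric picture is exactly the squeezing depicted in Figure~\ref{apachurramiento}, applied coherently along every maximal gradient path abutting at $\sigma$. Iterating these two regimes from the bottom to the top critical value yields a CW complex $Y$ with one $p$-cell per critical $p$-cell of $X$ and a homotopy equivalence $X\simeq Y$.

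The principal obstacle will be the careful verification of the free-face condition throughout the collapsing phase: one must check that, after the matched pairs already scheduled have been removed, each next redundant face $\beta$ really is a free face of its partner $\alpha$ in the residual complex. The argument is a well-founded induction governed by the partial order above, and the absence of gradient cycles is precisely what keeps the induction from collapsing under its own weight. This is the combinatorial heart of Forman's original proof, and the only genuinely delicate point; the homotopy-theoretic pieces (deformation retracts along free faces, cofibrant attachments of $p$-cells at critical values) are standard once the order of collapses has been set up.
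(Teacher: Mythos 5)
The paper does not prove Theorem~\ref{ftdmt}; it states it as a direct citation of \cite[Corollary~3.5]{MR1358614}, so there is no internal argument to compare your proposal against. Your sketch is a faithful reconstruction of Forman's original proof: pass from the gradient field to a discrete Morse function $f$ (acyclicity of $W$ is exactly what makes the reachability relation a partial order and hence makes $f$ exist), filter $X$ by sublevel sets of $f$, show that non-critical intervals produce collapses, and show that each critical value produces a cell attachment. This is sound at the level of detail given. The one point worth phrasing more carefully is the role of acyclicity: once $f$ exists, the free-face verification in the collapsing phase follows from the strict inequalities encoded in $f$ (any coface of a redundant $\beta$ other than its partner $\alpha$ has strictly larger $f$-value, so is either already collapsed away when $(\beta,\alpha)$ is reached, or lies outside $X_b$), rather than directly from the absence of gradient cycles; acyclicity is used once, at the outset, to construct $f$.
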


In particular, gradient-path dynamics on $W$ yield a way to compute the homology groups of~$X$. Explicitly, start by choosing an arbitrary orientation for each cell of $X$. For faces $\alpha^{(p)}\subset \beta^{(p+1)}$ of~$X$, let $\iota_{\alpha,\beta}$ denote the incidence number of $\alpha$ and $\beta$, i.e., the coefficient of $\alpha$ in the expression of the cellular boundary $\partial(\beta)$. Note that $\iota_{\alpha,\beta}=\pm1$, in view of the regularity assumption on $X$. In these terms, the multiplicity of a gradient path $\lambda=\left(\alpha_0\nearrow\beta_1\searrow\alpha_1\nearrow\cdots\nearrow \beta_k\searrow\alpha_k\right)$ is
\begin{equation}\label{sgns}
\mu(\lambda):=\prod_{i=1}^k\left(-\iota_{\alpha_{i-1},\beta_i}\cdot\iota_{\alpha_i,\beta_i}\rule{0mm}{3.6mm}\right).
\end{equation}
Then, the homology of $X$ with coefficients in a commutative unital ring $R$ can be computed through the \emph{Morse chain complex} $(\mu_*(X;R),\partial)$, which is degree-wise $R$-free, with basis in dimension $p\geq0$ given by the set of oriented $p$-dimensional critical faces of~$X$. The \emph{Morse boundary} is given at a critical cell $\beta^{(p+1)}$ by
\begin{equation}\label{morseboundary}
\partial(\beta)\,=\!\!\sum_{\text{\tiny $\gamma^{(p)}$ critical}} \iota^{\mu}_{\gamma,\beta}\,\cdot\,\gamma.
\end{equation}
Here, the Morse-type incidence number $\iota^{\mu}_{\gamma,\beta}$ is given by
\begin{equation}\label{incidencealamorse}
\iota^{\mu}_{\gamma,\beta}=\sum_{\alpha^{(p)}} \iota_{\alpha,\beta}\left(\sum_{\lambda}\mu(\lambda)\right)\cdot\gamma,
\end{equation}
where the outer summation runs over all faces $\alpha$ of $\beta$, and the inner summation runs over all gradient paths $\lambda$ from $\alpha$ to $\gamma$.

\section{The gradient field}\label{optgrafie}
The regular complexes we are interested in arise as subcomplexes of powers of simplicial complexes.\footnote{The methods in this section can be extended to subcomplexes of cartesian products of simplicial complexes; details will be given elsewhere.} We fix notation on such a context. Let $K$ denote a finite (abstract) simplicial complex, with set of simplices~$F$ and with a chosen linear ordering $\preceq$ on its vertex set $V\!$. Without loss of generality, we may as well assume $$V=\{1,2,\ldots,m\},$$ with $\preceq$ corresponding to the usual ordering of integers. As above, we write $\alpha^{(p)}$ to indicate that $\alpha\in F$ is $p$-dimensional, and use the more specific list-type notation $$\alpha=\alpha_0\alpha_1\cdots\alpha_p,$$ where $\alpha$ is oriented according to the ordered listing $$\alpha_0<\alpha_1<\cdots<\alpha_p$$ of its vertices. In these terms, the set of $p$-dimensional simplices is ordered lexicographically. Namely,
$$\alpha_0\alpha_1\cdots\alpha_p<\beta_0\beta_1\cdots\beta_p\Longleftrightarrow  \text{ there is }\ell\in\{0,1,\ldots,p\} \text{ with } \alpha_\ell<\beta_\ell \text{ but } \alpha_i=\beta_i \text{ for } 0\leq i<\ell.$$ The latter order is then extended to all of $F$ by declaring that any $p$-dimensional simplex is smaller than any $q$-dimensional simplex whenever $p<q$.

\medskip
Next, take the product cell decomposition in the $n$-th power $K^n$ of $K$. Thus, cells of $K^n$ are prism-like products of simplices $\alpha=\alpha_1\times\alpha_2\times\cdots\times\alpha_n$, with each $\alpha_i$ a simplex of $K$. It will be convenient to use the notation $\alpha=(\alpha_1,\alpha_2,\ldots,\alpha_n)$ or, more explicitly,
\begin{equation}\label{productofsimplices}
\alpha=\left(
\alpha_{1,0}\alpha_{1,1}\cdots\alpha_{1,p_1}\hspace{.3mm},\hspace{.5mm}\alpha_{2,0}\alpha_{2,1}\cdots\alpha_{2,p_2}\hspace{.3mm},\hspace{.5mm}\cdots,\hspace{.5mm}
\alpha_{n,0}\alpha_{n,1}\cdots\alpha_{n,p_n}
\rule{0mm}{4mm}\right).
\end{equation}
For $1\leq i\leq n$ and $0\leq j\leq p_i$, the $(i,j)$-th codimension-1 face of $\alpha$, denoted by $\partial_{i,j}(\alpha)$, is obtained by removing the $j$-th vertex from the $i$-th coordinate of~(\ref{productofsimplices}). Explicitly,
$$
\partial_{i,j}(\alpha)=\left(
\alpha_{1,0}\cdots\alpha_{1,p_1}\hspace{.3mm},\hspace{.7mm}\cdots,\hspace{.5mm}
\alpha_{i,0}\cdots\widehat{\alpha_{i,j}}\cdots\alpha_{i,p_i}\hspace{.3mm},\hspace{.7mm}\cdots,\hspace{.5mm}
\alpha_{n,0}\alpha_{n,1}\cdots\alpha_{n,p_n}
\rule{0mm}{4mm}\right).
$$
For instance, with the chosen orientations, the incidence number of $\alpha$ and $\partial_{i,j}(\alpha)$ is given by 
$$[\alpha\colon\partial_{i,j}(\alpha)]=(-1)^{\epsilon_{i,j}},$$ where $\epsilon_{i,j}=p_1+\cdots+p_{i-1}+j.$ Lastly, the lexicographic order on $F$ defined at the end of the previous paragraph is extended, again lexicographically over the $n$ coordinates, to all cells~(\ref{productofsimplices}).

\medskip
The goal in this section is the construction of a discrete gradient field $W(K,n)$ on the subcomplex $\DF(K,n)$ of $K^n$. Although it is not explicit from the notation, $W(K,n)$ certainly depends on~$\preceq\hspace{.3mm}$, that is, on the chosen ordering of vertices leading to the identification of the vertex set of $K$ with $\{1,2,\ldots,m\}$.

\medskip
By definition, the $(p_1+\cdots+p_n)$-dimensional cell $\alpha$ in~(\ref{productofsimplices}) belongs to~$\DF(K,n)$ precisely when the $n$ sets of vertices 
\begin{equation}\label{sets}
\{\alpha_{1,0},\ldots,\alpha_{1,p_1}\},\ldots,\{\alpha_{n,0},\ldots,\alpha_{n,p_n}\}
\end{equation}
are pairwise disjoint. Let $V(\alpha)$ stand for the union of the sets in~(\ref{sets}). For $i\in\{1,2,\ldots,n\}$ and a vertex $v\not\in V(\alpha)$ such that $\{\alpha_{i,0},\ldots,\alpha_{i,p_i}\}\cup\{v\}$ is a simplex of $K$, we let
\begin{equation}\label{suma}
\alpha+_iv
\end{equation}
stand for the cell of $\DF(K,n)$ obtained from~(\ref{productofsimplices}) by inserting the vertex $v$ in the list of vertices in the $i$-th entry of $\alpha$. Of course, in terms of the list-type notation above, insertion of $v$ is made at a position that renders an ordered list of vertices. Accordingly, the face $\partial_{i,j}(\alpha)$ will also be denoted as
\begin{equation}\label{resta}
\alpha-_i\alpha_{i,j}.
\end{equation}
We will need to iterate the notation in~(\ref{suma}) and~(\ref{resta}). For instance, we will write $\alpha-_iu+_jv$ as a shorthand of $(\alpha-_iu)+_jv$, whenever the constructions make sense.

\subsection{Algorithmic construction}
With the preparation above, set $d:=\dim(X)$ where $X:=\DF(K,n)$. The field $W:=W(K,n)$ is constructed through an algorithmic procedure initialized by $W:=\varnothing$, together with the initialization of auxiliary variables $A^p:=F^p$, for $0\leq p\leq d$. Here~$F^p$ stands for the set of $p$-dimensional cells of $\DF(K,n)$ with the ordering inherited from the cells of~$K^n$ discussed at the end of the second paragraph of Section~\ref{optgrafie}. Throughout the construction of~$W$, we will remove elements from each~$A^p$, keeping the restricted order on the resulting sets $A^p$. The role of $A^p$ is to keep track of the $p$-dimensional faces of $\DF(K,n)$ that are \emph{available} at any given stage of the algorithm, i.e., $p$-dimensional faces of $\DF(K,n)$ that, at the given stage, are not part of any (so far constructed) directed edge in~$W$. Throughout the algorithm, new directed edges $$\alpha^{(p)}\nearrow\beta^{(p+1)},$$ with $(\alpha,\beta)\in A^p\times A^{p+1}$, are added to $W$ by means of a family of processes $\mathcal{P}(p)$ executed for $p=d-1,d-2,\ldots,1,0$ (in that order). Process $\mathcal{P}(p)$, which is executed provided  the current sets of available faces $A^p$ and $A^{p+1}$ are not empty (so there is a chance to add new directed edges to~$W$), consists of a family of subprocesses $\mathcal{P}(p,r)$ executed for $r=n,n-1,\ldots,1$ (in that order). The goal of $\mathcal{P}(p,r)$ is to add to $W$ directed edges of the form
\begin{equation}\label{available}
\alpha^{(p)}\nearrow\alpha^{(p)}+_ru,
\end{equation}
for some vertex $u\in\{1,2,\ldots,m\}$. To accomplish that, $\mathcal{P}(p,r)$ consists of a family of subprocesses $\mathcal{P}(p,r,u)$ executed for $u=m,m-1,\ldots,1$ (in that order), while each $\mathcal{P}(p,r,u)$ consists of a family of instructions $\mathcal{P}(p,r,u,\alpha)$ executed for $\alpha$ in (the current) $A^p$ following its lexicographic order, though this time from the smaller cell to the larger cell. At the core of the nested family of processes, $\mathcal{P}(p,r,u,\alpha)$ checks whether, at the moment of its execution,
\begin{equation}\label{assessment}
(\alpha,\alpha+_ru)\in A^p\times A^{p+1},
\end{equation}
i.e., whether~(\ref{available}) is available to form a new directed edge. If so, $\mathcal{P}(p,r,u,\alpha)$ adds~(\ref{available}) to $W$ and sets $\alpha$ and $\alpha+_ru$ as no longer available by removing them from $A^p$ and $A^{p+1}$, respectively.

\smallskip
From its construction we see that the family $W$  of directed edges resulting at the end of the algorithm forms a discrete field. Furthermore, if $\alpha^{(p)}$ is a critical cell, i.e., a cell in a final $A^p$, then no face $\beta^{(p-1)}$ and no coface $\gamma^{(p+1)}$ of $\alpha$ can be critical. This yields Theorem~\ref{maintheoremmaximal} in view of Proposition~\ref{aciclicidad} below.

\smallskip
In preparation for the arguments in the rest of the paper, the reader will find particularly enlightening the following situation:

\begin{example}\label{ejk3}{\em
Consider the case of the complex $\DF(K_3,2)$ for the complete graph $K_3$. Since $\dim(\DF(K_3,2))=1$, the construction of $W(K_3,2)$ reduces to the process $\mathcal{P}(0)$. At the start of the construction, the ordered list of available faces are $$A^0=\{(1,2),(1,3),(2,1),(2,3),(3,1),(3,2)\}$$ and $$A^1=\{(1,23),(2,13),(3,12),(12,3),(13,2),(23,1)\}.$$ Then $\mathcal{P}(0)$ evolves in the following order:
\begin{itemize}
\item $\mathcal{P}(0,2,3)$ constructs the directed edges $(1,2)\nearrow(1,23)$ and $(2,1)\nearrow(2,13)$ (in that order), which renders $A^0=\{(1,3),(2,3),(3,1),(3,2)\}$ and $A^1=\{(3,12),(12,3),(13,2),(23,1)\}$.
\item $\mathcal{P}(0,2,2)$ constructs the directed edge $(3,1)\nearrow(3,12)$, which renders $A^0=\{(1,3),(2,3),(3,2)\}$ and $A^1=\{(12,3),(13,2),(23,1)\}$.
\item $\mathcal{P}(0,1,2)$ constructs the directed edge $(1,3)\nearrow(12,3)$, which renders $A^0=\{(2,3),(3,2)\}$ and $A^1=\{(13,2),(23,1)\}$.
\item $\mathcal{P}(0,1,1)$ constructs the directed edge $(3,2)\nearrow(13,2)$, which renders the critical cells $(2,3)$ and $(23,1)$ in dimensions 0 and 1, respectively. This is of course compatible with the well known homotopy equivalence $\F(S^1,2)\simeq S^1$. 
\end{itemize}
}\end{example}

\subsection{Acyclicity}

\begin{lemma}\label{redforzado}
Assume $\alpha^{(p)}\nearrow\beta=\alpha+_iv$ lies in $W$ and take $\gamma=\beta-_jw$. If $i<j$ or, else, if $i=j$ with $v<w$, then $\gamma$ is redundant with $\gamma\nearrow\delta=\gamma+_ku$, where either $j<k$ or, else $j=k$ and $w<u$.
\end{lemma}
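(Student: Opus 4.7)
The plan is to trace the algorithmic construction of $W$, leveraging the scheduling inside process $\mathcal{P}(p)$: the subprocesses $\mathcal{P}(p,r,u,\cdot)$ are executed in \emph{decreasing} lex order on $(r,u)$. The hypothesis ``$i<j$, or $i=j$ with $v<w$'' says exactly that $(j,w)$ is lex-greater than $(i,v)$, so $\mathcal{P}(p,j,w,\gamma)$ runs strictly \emph{before} $\mathcal{P}(p,i,v,\alpha)$.

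First I would check that $\beta\in A^{p+1}$ at the moment $\mathcal{P}(p,j,w,\gamma)$ is executed. Since $\alpha\nearrow\beta$ is appended to $W$ at the later instant $\mathcal{P}(p,i,v,\alpha)$, the availability condition~(\ref{assessment}) forces $\beta\in A^{p+1}$ there; as elements only leave the $A$-lists, $\beta$ was in $A^{p+1}$ at all earlier instants too. Moreover, within the innermost loop of $\mathcal{P}(p,j,w,\cdot)$ the unique $\alpha'\in A^p$ with $\alpha'+_j w=\beta$ is $\alpha'=\gamma$ (vertex-insertion is uniquely invertible), so no pairing processed at $\mathcal{P}(p,j,w,\alpha')$ with $\alpha'<\gamma$ can have evicted $\beta$ from $A^{p+1}$.

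Next I would deduce that $\gamma\notin A^p$ at the moment $\mathcal{P}(p,j,w,\gamma)$ runs. Indeed, were $\gamma$ still available, the criterion~(\ref{assessment}) would succeed with $(\gamma,\gamma+_j w)=(\gamma,\beta)\in A^p\times A^{p+1}$, the algorithm would then append the edge $\gamma\nearrow\beta$ to $W$, and $\beta$ would be removed from $A^{p+1}$, contradicting the later availability of $\beta$ needed to form the pairing $\alpha\nearrow\beta$.

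Finally, since the outer processes $\mathcal{P}(q)$ with $q>p$ do not modify $A^p$, and within $\mathcal{P}(p)$ the cell $\gamma$ can leave $A^p$ only as the first coordinate of a newly appended pairing, some $\mathcal{P}(p,k,u,\gamma)$ executed before $\mathcal{P}(p,j,w,\gamma)$ must have added an edge $\gamma\nearrow\gamma+_k u$ to $W$. Being processed earlier under the reverse-lex schedule means $(k,u)$ is lex-greater than $(j,w)$, i.e.\ $j<k$, or $j=k$ with $w<u$, which is the asserted conclusion. The only genuine obstacle is careful bookkeeping of when elements leave the $A$-lists; once unwound, the lemma reduces to an automatic consequence of the decreasing-lex scheduling of the subprocesses.
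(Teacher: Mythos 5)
Your proposal is correct and follows essentially the same reasoning as the paper's proof: both arguments observe that the reverse-lexicographic scheduling places $\mathcal{P}(p,j,w,\gamma)$ before $\mathcal{P}(p,i,v,\alpha)$, deduce that $\beta$ is still available but $\gamma$ is not at that earlier instant (otherwise $\gamma\nearrow\beta$ would have been formed, blocking $\alpha\nearrow\beta$), and conclude that $\gamma$ must already have been paired as redundant at an even earlier subprocess, forcing the stated inequality on $(k,u)$. You have simply made the bookkeeping about availability lists more explicit than the paper does, but the skeleton of the argument is identical.
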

\begin{proof}
In the algorithm, the instruction $\mathcal{P}(p,j,w,\gamma)$, which assesses the availability of the pair $(\gamma,\beta)$, is executed before $\mathcal{P}(p,i,v,\alpha)$, which is responsible for the construction of the assumed $\alpha\nearrow\beta$. This means that, although $\beta$ is available at the moment of executing $\mathcal{P}(p,j,w,\gamma)$, $\gamma$ is not. So $\gamma$ must have been identified as redundant through in instruction $\mathcal{P}(p,k,u,\gamma)$ previous to $\mathcal{P}(p,j,w,\gamma)$. This yields the result.
\end{proof}

The crux of the matter in the previous proof is the observation that, in the algorithmic construction of $W$, the directed edge $\gamma\nearrow\delta$ must have already been constructed by the time of the construction of $\alpha\nearrow\beta$. Such a ``timing'' issue will be crucial for later arguments in the paper.

\begin{proposition}\label{aciclicidad}
The field $W$ is gradient.
\end{proposition}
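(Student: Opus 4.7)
The plan is to argue by contradiction. Assume a minimal gradient cycle
\[
\alpha_0\nearrow\beta_1\searrow\alpha_1\nearrow\cdots\nearrow\beta_k\searrow\alpha_k=\alpha_0
\]
exists at some dimension $p$, and write $\beta_i=\alpha_{i-1}+_{r_i}v_i$ and $\alpha_i=\beta_i-_{s_i}w_i$. Every arrow of the cycle was created at the unique algorithmic instruction $\mathcal{P}(p,r_i,v_i,\alpha_{i-1})$, and because these instructions are executed by $\mathcal{P}(p,\cdot)$ in lex-decreasing order on $(r,u)$ and lex-increasing order on $\alpha$, the $k$ cycle pairings are totally ordered by the time at which they were added to $W$. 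Among them, I would single out the one created latest and, after cyclic relabeling, denote it by $T_1=(r_1,v_1,\alpha_0)$.

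The main tool is Lemma~\ref{redforzado}, read as a lex-monotonicity principle: whenever a cycle step $i$ satisfies $(s_i,w_i)$ strictly lex-greater than $(r_i,v_i)$ (``Case~1''), the lemma forces $(r_{i+1},v_{i+1})$ strictly lex-greater than $(s_i,w_i)$, and hence strictly lex-greater than $(r_i,v_i)$, so the pairing of $\alpha_i$ was constructed strictly earlier than that of $\alpha_{i-1}$. Applied to the step $k$ closing the cycle at $\alpha_0$, Case~1 would yield a pairing strictly earlier than $T_1$, contradicting the latest choice. Hence step $k$ falls in ``Case~2'', i.e., $(s_k,w_k)$ is strictly lex-smaller than $(r_k,v_k)$.

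To dispose of this remaining case I would fall back on the ``timing'' argument from the proof of Lemma~\ref{redforzado}. Because $(s_k,w_k)\neq(r_k,v_k)$ and cells of $\DF(K,n)$ have pairwise disjoint coordinate vertex sets, the equality $w_k=v_k$ is forced to imply $s_k=r_k$; hence $w_k\neq v_k$ and the vertex $v_k$ survives in $\alpha_0$, so the pair $(\alpha_0,\alpha_0+_{r_k}v_k)$ is invalid and $\mathcal{P}(p,r_k,v_k,\alpha_0)$ is a no-op. On the other hand, Case~2 places $\mathcal{P}(p,s_k,w_k,\alpha_0)$ strictly after $\mathcal{P}(p,r_k,v_k,\alpha_{k-1})$, by which time $\beta_k$ has already been paired with $\alpha_{k-1}$ and is unavailable. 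Propagating these availability constraints around the cycle and inserting Lemma~\ref{redforzado} at each Case~1 step should produce a strict lex-monotone chain on the $(r_i,v_i)$ that is incompatible with the return $\alpha_k=\alpha_0$.

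The hard part I anticipate is the iterated Case~2 bookkeeping, for which Lemma~\ref{redforzado} is silent. The cleanest way to handle it would be to establish a ``dual'' statement: in Case~2, the pairing of $\alpha_i$ has arrow data strictly lex-smaller than $(s_i,w_i)$. Verified in the worked example of Example~\ref{ejk3}, this dual, combined with Lemma~\ref{redforzado}, would produce $(r_{i+1},v_{i+1})$ on the same side of $(r_i,v_i)$ as $(s_i,w_i)$ at every cycle step, and tracking the resulting zig-zag together with the additional constraint that $v_i\in\alpha_i$ should prevent the sequence from closing up, finishing the contradiction.
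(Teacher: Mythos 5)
You have the right technical tool (Lemma~\ref{redforzado}) and the right general shape (rotate the cycle to an extremal arrow, extract a lexicographic monotonicity, contradict the return), but the argument has a real gap. Your ``Case~2'' --- where $(s_i,w_i)$ is lex-smaller than $(r_i,v_i)$ --- is unaddressed, and the proposed ``dual'' lemma that would fill it is neither proved nor true as stated: when $\mathcal{P}(p,s_i,w_i,\alpha_i)$ runs, the only thing the algorithm guarantees is that $\beta_i$ is already unavailable (it was paired at the earlier stage $(r_i,v_i)$); nothing is said about whether $\alpha_i$ itself was paired before or after stage $(s_i,w_i)$, so you cannot conclude $(r_{i+1},v_{i+1})<(s_i,w_i)$. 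The closing sentence, that the zig-zag ``should prevent the sequence from closing up,'' is a hope rather than a proof --- a chain of pairs that alternately increases and decreases in lex order has no a priori reason not to return.

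The paper sidesteps Case~2 entirely with a different extremal choice and one additional structural observation. It rotates the cycle so that $\alpha_0\nearrow\beta_0=\alpha_0+_{i_0}v_0$ is the \emph{earliest}-constructed arrow (not the latest), which gives $(i_r,v_r)\leq(i_0,v_0)$ lexicographically for every $r$; it then tracks the vertex $v_0$ around the cycle and looks at the first index $t$ with $v_0\notin V(\alpha_t)$ (such $t$ exists since the cycle returns to $\alpha_0$ and $v_0\notin V(\alpha_0)$, and $t\geq2$ since $\alpha_1\neq\alpha_0$). At that step $\alpha_t=\beta_{t-1}-_{i_0}v_0$ while $v_{t-1}\neq v_0$, and together with $(i_{t-1},v_{t-1})\leq(i_0,v_0)$ this \emph{forces} the hypothesis of Lemma~\ref{redforzado} at step $t$ --- Case~1 is automatic exactly there. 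The lemma's conclusion then gives $(i_t,v_t)>(i_0,v_0)$, contradicting the earliest choice. So the missing idea is: do not try to classify every step of the cycle into two cases; apply Lemma~\ref{redforzado} once, at the step where $v_0$ first disappears, where its hypothesis is guaranteed to hold.
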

\begin{proof}
Assume, for a contradiction, a gradient cycle in $H_W$
$$
\alpha_0\nearrow\beta_0\searrow\alpha_1\nearrow\beta_1\searrow\cdots\searrow\alpha_n\nearrow\beta_n\searrow\alpha_0\,,
$$
with $|\{\alpha_0,\alpha_1,\ldots,\alpha_n\}|=n+1$ (so $n\geq1$), and set $$\alpha_{n+1}:=\alpha_0 \text{ \ \ \ and \ \ \  } \beta_{n+1}:=\beta_0.$$ We can arrange the start of the cycle so to assume that $\alpha_0\nearrow\beta_0$ is constructed before any other $\alpha_r\nearrow\beta_r$, $1\leq r\leq n$. Say $\alpha_r\nearrow\beta_r=\alpha_r+_{i_r}v_r$, for $0\leq r\leq n$, so that
\begin{itemize}
\item[(i)] $i_r\leq i_0$,
\item[(ii)] $i_r=i_0 \;\;\Rightarrow \;\;v_r\leq v_0$,
\end{itemize}
for $1\leq r\leq n$. Let $t$ be the first integer in $\{1,2,\ldots,n+1\}$ so that $v_0\not\in V(\alpha_t)$ (this uses $\alpha_0=\alpha_{n+1}$ and $v_0\not\in V(\alpha_0)$). Note that $t\geq2$, as $\alpha_0\neq\alpha_1$. So, $v_0$ is kept as a vertex in the $i_0$-th entry of each~$\alpha_\ell$ with $1\leq \ell\leq t-1$, while
\begin{itemize}
\item[(iii)] $\alpha_{t-1}\nearrow\beta_{t-1}=\alpha_{t-1}+_{i_{t-1}}v_{t-1}$, with $v_{t-1}\neq v_0$,
\item[(iv)] $\alpha_t=\beta_{t-1}-_{i_0}v_0$ \hspace{.5mm}(in particular $t\leq n$).
\end{itemize}
However, in view of conditions (i) and (ii) for $r=t-1$ and $r=t$, the conclusion of Lemma~\ref{redforzado} applied to the situation in conditions~(iii) y (iv) yields $v_{t-1}=v_0$, which contradicts (iii).
\end{proof}

\section{Classification of cells}\label{crigradflo}
In the rest of the paper we focus on the case of $\DF(\Delta^{m,d},2)$, assuming $1\leq d\leq m-2$ so to avoid the two exceptional but well-known cases $\F(\Delta^m,2)\simeq S^{m-1}\simeq\F(S^{m-1},2)$. For exposition purposes, it is convenient to make the following notational adjustments, which will be in force from now on. Firstly, the simplex under consideration is thought of as~$\Delta^{\mu-1}$, with vertex set $\{1,2,\ldots,\mu\}$, and the skeleton under consideration is~$\Delta^{\mu-1,\mu-\kappa-1}$. Thus
\begin{equation}\label{initialbounds}
\mbox{$2\leq \kappa\leq \mu-2$ \ (so $4\leq\mu$).}
\end{equation}
Secondly, following the notation of~(\ref{productofsimplices}), cells of $X_{\mu,\kappa}:=\DF(\Delta^{\mu-1,\mu-\kappa-1},2)$ are written in the form
$(A,B):=(a_1a_2\cdots a_r,b_1b_2\cdots b_s)$, with
\begin{align}
&1\leq a_1<a_2<\cdots<a_r\leq\mu, \;\;\;\; 1\leq r\leq \mu-\kappa,\nonumber\\
&1\leq \hspace{.35mm}b_1<\hspace{.35mm}b_2<\cdots<\hspace{.35mm}b_s\leq\mu, \;\;\;\; 1\leq s\leq \mu-\kappa,\label{acotado}\\
&\{a_1,a_2,\ldots,a_r\}\cap\{b_1,b_2,\ldots,b_s\}=\varnothing.\nonumber
\end{align}
If we need to refer to the sets $\{a_1,a_2,\ldots,a_r\}$ or $\{b_1,b_2,\ldots,b_s\}$ (as in the third condition above), we write $\{A\}$ or $\{B\}$. Also, when dealing with multiple cells of $X_{\mu,\kappa}$ at once, we use a (multi-)primed notation, e.g.,~$(A',B')$ with ingredients $a'_i$, $b'_i$, $r'$ and $s'$. Lastly, recall from the introduction that $\langle p,q\rangle$ stands for the set of integers $\{p,p+1,\ldots,q-1,q\}$.

\medskip
Note that $\dim(A,B)=r+s-2\leq \min\{\mu-2,2(\mu-\kappa-1)\}$. In fact:

\begin{proposition}\label{pure}
All maximal cells of $X_{\mu,\kappa}$ have dimension $\min\{\mu-2,2(\mu-\kappa-1)\}$, i.e., $X_{\mu,\kappa}$ is \emph{pure} of degree $\min\{\mu-2,2(\mu-\kappa-1)\}$.
\end{proposition}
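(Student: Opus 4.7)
The plan is to establish purity by a two-step strategy: first confirm the upper bound on cell dimension from the defining constraints on $(A,B)$, then show that any non-maximal cell admits a direct extension by inserting an additional vertex into one of its two coordinates.

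For the upper bound, recall from the conditions in (\ref{acotado}) that a cell $(A,B)$ with $|A|=r$ and $|B|=s$ satisfies $1\leq r,s\leq \mu-\kappa$ (each coordinate is a simplex of the $(\mu-\kappa-1)$-skeleton of $\Delta^{\mu-1}$, hence has at most $\mu-\kappa$ vertices) and $r+s\leq \mu$ (the two vertex sets are disjoint subsets of $\{1,\ldots,\mu\}$). Since $\dim(A,B)=r+s-2$, this immediately yields $\dim(A,B)\leq \min\{2(\mu-\kappa),\mu\}-2=\min\{2(\mu-\kappa-1),\mu-2\}$.

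For the extendability step, suppose $(A,B)$ is a cell with $r+s-2<\min\{2(\mu-\kappa-1),\mu-2\}$, equivalently $r+s<\min\{2(\mu-\kappa),\mu\}$. Then two things hold simultaneously: (i) $r+s<\mu$, so there exists a vertex $v\in\langle 1,\mu\rangle\setminus(\{A\}\cup\{B\})$; and (ii) $r+s<2(\mu-\kappa)$, so at least one of the two inequalities $r<\mu-\kappa$, $s<\mu-\kappa$ is strict. Pick a coordinate, say the first, where the bound is not attained (the other case is symmetric); then $A\cup\{v\}$ has at most $\mu-\kappa$ vertices and is disjoint from $B$, so $(A+_1 v,\,B)$ is a well-defined cell of $X_{\mu,\kappa}$ strictly containing $(A,B)$. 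Hence $(A,B)$ is not maximal, and the contrapositive yields the claim.

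I do not anticipate a genuine obstacle here; the only subtlety is to remember that the constraint $r+s\leq\mu$ and the individual bounds $r,s\leq\mu-\kappa$ are distinct, which is precisely why both the $\mu-2$ and the $2(\mu-\kappa-1)$ terms appear in the minimum. The case split implicit in the $\min$ is absorbed automatically by treating the two defining inequalities symmetrically as above.
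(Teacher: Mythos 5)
Your proof is correct, and it takes a genuinely different route from the paper's. The paper argues directly that every cell $(A,B)$ is a face of some cell of dimension exactly $\min\{\mu-2,2(\mu-\kappa-1)\}$, achieved by adding all the ``missing'' vertices of $\langle 1,\mu\rangle\setminus V(A,B)$ at once; this requires splitting into cases ($\mu\leq 2\kappa$ vs.\ $2\kappa\leq\mu$, and then $\kappa\leq r$, $\kappa\leq s$, or $r<\kappa>s$) in order to decide how to distribute those vertices between the two coordinates so that neither exceeds cardinality $\mu-\kappa$. Your argument instead shows the contrapositive one step at a time: any cell of dimension strictly below $\min\{\mu-2,2(\mu-\kappa-1)\}$ admits a proper coface obtained by inserting a single vertex, since $r+s<\mu$ gives an unused vertex $v$ and $r+s<2(\mu-\kappa)$ forces one of the two coordinates to be non-saturated. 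This avoids the case split entirely and uses a pigeonhole observation in place of the explicit construction; the paper's approach is slightly more informative in that it exhibits a top-dimensional coface in one move, but yours is shorter and absorbs the $\min$ naturally, as you note. Both establish the required lower bound on dimension of maximal cells, and the upper bound $r+s-2\leq\min\{\mu-2,2(\mu-\kappa-1)\}$ is immediate from~(\ref{acotado}) and disjointness in both treatments.
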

\begin{proof}
We give the straightforward argument for completeness. The situation for $\mu\leq2\kappa$, i.e., when $\min\{\mu-2,2(\mu-\kappa-1)\}=2(\mu-\kappa-1)$, is easier, as any cell $(A,B)$ can then be realized as a face of a cell $(A',B')$ with $r'=s'=\mu-\kappa$. So, assume $2\kappa\leq\mu$, i.e., $\min\{\mu-2,2(\mu-\kappa-1)\}=\mu-2$.

\medskip\noindent{\bf Case  $\kappa\leq r$}. We can add the $\mu-r-s$ vertices in $\langle1,\mu\rangle-V(A,B)$ into the $B$-entry, so to realize $(A,B)$ as a face of a cell $(A,B')$, as $|B'|=s+\mu-r-s\leq\mu-\kappa$. 

\medskip\noindent{\bf Case  $\kappa\leq s$}. We can add the $\mu-r-s$ vertices in $\langle1,\mu\rangle-V(A,B)$ into the $A$-entry, so to realize $(A,B)$ as a face of a cell $(A',B)$, as $|A'|=r+\mu-r-s\leq\mu-\kappa$.

\medskip\noindent{\bf Case $r<\kappa>s$}. Then there are $(\kappa-r)+(\kappa-s)$ vertices in $\langle1,\mu\rangle-V(A,B)$, and we add $\kappa-r$ (respectively, $\kappa-s$) of them into the $A$-entry (respectively, $B$-entry), so to realize $(A,B)$ as a face of a cell $(A',B')$, with $r'\geq\kappa\leq s'$. We are then done in view of any of the two previous cases.
\end{proof}

Let $W_{\mu,\kappa}:=W(\Delta^{\mu-1,\mu-\kappa-1},2)$. We describe in Theorem~\ref{teorema1} below all directed edges in $W_{\mu.\kappa}$ of the form $(A,B)\nearrow(A,B)+_2u$. The corresponding description for the totality of directed edges in~$W_{\mu.\kappa}$ of the form $(A,B)\nearrow(A,B)+_1u$ is then given in Theorem~\ref{teorema2}.

\begin{theorem}\label{teorema1}
Any cell $(A,B)$ of $X_{\mu,k}$ with $s>1$ and
\begin{equation}\label{conten}
\langle b_s+1,\mu\rangle\subseteq\{A\}
\end{equation}
is collapsible. In fact
\begin{equation}\label{insercion1}
(A,B)-_2b_s\nearrow(A,B)
\end{equation}
is a directed edge in $W_{\mu,\kappa}$. Furthermore, the directed edges in~(\ref{insercion1}) account for all directed edges in $W_{\mu,\kappa}$ that arise by insertion of a vertex on the second coordinate of a cell in~$X_{\mu,\kappa}$.
\end{theorem}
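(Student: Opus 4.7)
The plan is to prove the theorem by descending induction on the source dimension $p$, establishing simultaneously both the existence assertion (cells satisfying the hypothesis yield the described $+_2$ edges) and the exhaustiveness assertion (no other $+_2$ edges of $W_{\mu,\kappa}$ exist). The base case is vacuous. Two observations drive the analysis throughout: first, the hypothesis $\langle b_s+1,\mu\rangle \subseteq \{A\}$ is \emph{monotone} in $\{A\}$ (enlarging $\{A\}$ preserves it); second, at each dimension $p$ the algorithm runs $\mathcal{P}(p,2,\cdot)$ before $\mathcal{P}(p,1,\cdot)$, and $\mathcal{P}(p+1,\cdot)$ entirely precedes $\mathcal{P}(p,\cdot)$.

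For \emph{existence}, given $(A,B)$ of dimension $p+1$ satisfying the hypothesis, I verify that both $(A,B)-_2 b_s$ and $(A,B)$ are still available at the moment of execution of $\mathcal{P}(p,2,b_s,(A,B)-_2 b_s)$. Availability of the source cell: the only earlier substages that could remove it are $\mathcal{P}(p,2,u',(A,B)-_2 b_s)$ with $u'>b_s$, and the hypothetical match fails the validity test since $u'\in\{A\}$ by hypothesis. Availability of $(A,B)$ requires ruling out earlier matchings in $\mathcal{P}(p+1,\cdot)$. A $+_2$ matching $(A,B)\nearrow(A,B\cup\{u\})$ is ruled out by the inductive exhaustiveness: such an edge would demand $u>b_s$ with $\langle u+1,\mu\rangle\subseteq\{A\}$ and $u\notin\{A\}$, contradicting the hypothesis. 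A $+_1$ matching $(A,B)\nearrow(A\cup\{u\},B)$ is ruled out thus: either $|A|=\mu-\kappa$ so no such coface exists, or the coface inherits the hypothesis via monotonicity and hence, by the inductive existence, is matched via $+_2$ at $\mathcal{P}(p+1,2,b_s,\cdot)$---which precedes $\mathcal{P}(p+1,1,u,\cdot)$---so the coface is unavailable when the $+_1$ substage fires.

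For \emph{exhaustiveness}, suppose an edge $\gamma\nearrow(A,B)$ with $\gamma=(A,B)-_2 b_j$ is added at $\mathcal{P}(p,2,b_j,\gamma)$; validity of $\gamma$ immediately forces $s\geq 2$. I first establish $\langle b_s+1,\mu\rangle\subseteq\{A\}$ by contradiction: setting $v:=\max(\langle b_s+1,\mu\rangle-\{A\})$, I argue that the earlier stage $\mathcal{P}(p,2,v,\gamma)$ actually adds the competing edge $\gamma\nearrow(A,(B-b_j)\cup\{v\})$; availability of the competing target uses arguments parallel to the existence step (inductive exhaustiveness rules out its $+_2$ matchings, and inductive existence applied to the monotonically-inherited hypothesis on $(A\cup\{u'''\},(B-b_j)\cup\{v\})$ rules out its $+_1$ matchings), while availability of $\gamma$ itself follows from the maximality of $v$. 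The competing edge would make $\gamma$ unavailable at our claimed later stage, a contradiction. I then force $j=s$: were $j<s$, the earlier stage $\mathcal{P}(p,2,b_s,(A,B)-_2 b_s)$ would either add a competing edge $(A,B)-_2 b_s\nearrow(A,B)$ (directly contradicting availability of $(A,B)$ later) or $(A,B)-_2 b_s$ would have been matched even earlier to some $(A,(B-b_s)\cup\{u''\})$ with $u''>b_s$; applying the just-proved part of exhaustiveness chronologically to that earlier edge yields $\langle u''+1,\mu\rangle\subseteq\{A\}$, which combined with the already-established $\langle b_s+1,\mu\rangle\subseteq\{A\}$ forces $u''\in\{A\}$, contradicting the validity of the earlier match.

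The principal obstacle is that $+_1$ matchings at dimension $p+1\to p+2$ are governed by Theorem~\ref{teorema2}, not available at this point. The resolution, used throughout, is to avoid classifying such matchings directly: by the monotonicity of the hypothesis, the relevant $+_1$ coface inherits the Theorem~\ref{teorema1} hypothesis and is therefore matched via $+_2$ at a chronologically earlier stage (by inductive existence at one higher dimension), preempting the $+_1$ match. Boundary situations in which certain cofaces of $(A,B)$ are not valid cells of $X_{\mu,\kappa}$ only simplify the analysis by eliminating potential matchings from consideration.
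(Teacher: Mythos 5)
Your proof is correct in substance, but its overall architecture is genuinely different from the paper's. The paper gives a direct, non-inductive argument: it first proves that $(A,B)$ is non-redundant by showing that any putative match $(A,B)\nearrow(A,B)+_ju$ would be preempted because the earlier instruction $\mathcal{P}(r+s-2,2,b_s,(A,B)+_ju-_2b_s)$ already steals the target $(A,B)+_ju$ (availability of the first coordinate of that pair follows directly from~(\ref{conten}), with no appeal to the theorem at a higher dimension), and then derives the exhaustiveness assertion in two lines from the already-proved existence half together with the degree-one constraint built into the definition of a discrete field: if $(A',B')-_2b'_i\nearrow(A',B')$ were a rogue edge then, depending on whether~(\ref{conten}) holds for $(A',B')$, either $(A',B')-_2b'_{s'}\nearrow(A',B')$ or $(A',B')-_2b'_i\nearrow(A',B')-_2b'_i+_2u_0$ is already in $W_{\mu,\kappa}$ by the first half, giving a face of degree two. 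You instead run a descending induction on source dimension and prove both halves simultaneously; in the existence step you rule out $+_1$ cofaces via ``monotonicity plus inductive existence at $p+1$'' where the paper simply argues availability outright, and in the exhaustiveness step you re-derive the chronological preemption from scratch rather than invoking the degree-one constraint. Both work, and your observation that the hypothesis~(\ref{conten}) is monotone in $\{A\}$ and is therefore inherited by $+_1$ cofaces is a nice structural remark, but it buys nothing here that the direct availability argument doesn't already give, while committing you to maintaining a two-part inductive invariant. A couple of small imprecisions worth noting: the base case $p=\dim X_{\mu,\kappa}-1$ is not literally vacuous (there are cells in the top dimension satisfying the hypothesis; what is vacuous is only the inductive hypothesis at that stage), and in the final step of your exhaustiveness argument the appeal to ``the just-proved part of exhaustiveness'' applied to the earlier edge is superfluous, since $u''>b_s$ together with the already-established $\langle b_s+1,\mu\rangle\subseteq\{A\}$ immediately gives $u''\in\{A\}$ and invalidates the competing target. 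You also never explicitly rule out $(A,B)$ being removed as a \emph{target} within $\mathcal{P}(p,2,u',\cdot)$ for $u'>b_s$; this is trivial (it would force $u'\in\{B\}$ with $u'>b_s$), but should be said.
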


Of course, condition~(\ref{conten}) might hold just because $\langle b_s+1,\mu\rangle=\varnothing$, i.e., $b_s=\mu$.

\begin{proof}
Recall $\dim(A,B)=r+s-2$. We show below that $(A,B)$ is not redundant, so that it is available at the start of process $\mathcal{P}(r+s-3)$. Then $\mathcal{P}(r+s-3,2,b_s)$ is the first subprocess that could make $(A,B)$ lose the availability condition, namely, with $\mathcal{P}(r+s-3,2,b_s,(A,B)-_2b_s)$ assessing (as in~(\ref{assessment})) the pair $$\left(\rule{0mm}{3.5mm}(A,B)-_2b_s\,,\,(A,B)\right).$$ By~(\ref{conten}), $(A,B)-_2b_s$ is available at the moment of executing $\mathcal{P}(r+s-3,2,b_s,(A,B)-_2b_s)$, so the latter instruction does construct~(\ref{insercion1}).

\smallskip
To complete the proof of the first part of the theorem, it remains to show that $(A,B)$ is not redundant. Assume for a contradiction that a directed edge
\begin{equation}\label{imposible1}
(A,B)\nearrow(A,B)+_ju
\end{equation}
belongs to $W_{\mu,\kappa}$ for $j\in\{1,2\}$. Note that condition~(\ref{conten}) forces $u<b_s$. Therefore, before~(\ref{imposible1}) is constructed by $\mathcal{P}(r+s-2,j,u,(A,B))$, the earlier instruction $\mathcal{P}(r+s-2,2,b_s,(A,B)+_ju-_2b_s)$ is in charge of assessing the pair $$\left((A,B)+_ju-_2b_s,(A,B)+_ju\rule{0mm}{3.5mm}\right).$$ In such an assessment, both coordinates are available: the second one because it stays available up to $\mathcal{P}(r+s-2,j,u,(A,B))$, and the first one in view of~(\ref{conten}). So $\mathcal{P}(r+s-2,2,b_s,(A,B)+_ju-_2b_s)$ constructs $(A,B)+_ju-_2b_s\nearrow(A,B)+_ju$, which contradicts~(\ref{imposible1}).

\smallskip
To complete the proof, assume ---again for a contradiction--- that there is a directed edge
\begin{equation}\label{imposible2}
(A',B')-_2b'_i\nearrow(A',B')
\end{equation}
in $W_{\mu-\kappa}$ with either $i<s'$ or so that the corresponding (primed) condition~(\ref{conten}) is not fulfilled. In the latter case set $u_0:=\max\{u\colon b'_{s'}<u\leq\mu \text{ and } u\not\in A'\}$, so that $(A',B')-_2b'_i\nearrow(A',B')-_2b'_i+_2u_0$ has already been proved to lie in $W_{\mu,\kappa}$, which prevents~(\ref{imposible2}). Therefore, we can assume $i<s'$ with $\{u\colon b'_{s'}<u\leq\mu \text{ and } u\not\in A'\}=\varnothing$. But then $(A',B')-_2b'_s\nearrow(A',B')$ has already been proved to lie in $W_{\mu,\kappa}$, again preventing~(\ref{imposible2}).
\end{proof}

\begin{theorem}\label{teorema2}
Let $(A,B)$ be a cell of $X_{\mu,\kappa}$ with $r>1$.
\begin{enumerate}[(1)]
\item Assume~(\ref{conten}) holds and $s=1$. If $b_1-1\in\{A\}$, then $(A,B)-_1(b_1-1)\nearrow(A,B)$ is a directed edge in $W_{\mu,\kappa}$.
\item Assume~(\ref{conten}) fails (in particular $b_s<\mu$) and $s=\mu-\kappa$.
\begin{enumerate}
\item If $a_r=\mu$, then $(A,B)-_1\mu\nearrow(A,B)$ is a directed edge in $W_{\mu,\kappa}$.
\item If $a_r<\mu$ and
\begin{itemize}
\item[] $ (b.1) \;\; \langle b_s+1,\mu-1\rangle\subseteq\{A\};$\vspace{.5mm}
\item[] $ (b.2) \;\; \langle1,b_s-1\rangle\not\subseteq\{B\};$
\item[] $ (b.3) \;\; \overline{a}:=\max\left(\langle1,b_s-1\rangle-\{B\}\rule{0mm}{3.4mm}\right)\in\{A\},$
\end{itemize}
then $(A,B)-_1\overline{a}\nearrow(A,B)$ is a directed edge in $W_{\mu,\kappa}$.
\end{enumerate}
\end{enumerate}
Furthermore, the three types of directed edges described above account for all directed edges in $W_{\mu,\kappa}$ that arise by insertion of a vertex on the first coordinate of a cell in~$X_{\mu,\kappa}$.
\end{theorem}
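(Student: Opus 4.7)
The plan is to mimic the proof of Theorem \ref{teorema1}, relying on the algorithm's execution order: within each dimension $p$, $\mathcal{P}(p,2)$ runs before $\mathcal{P}(p,1)$, and higher-$p$ processes run before lower-$p$ ones. Thus, by the time any first-coordinate candidate is tested at dimension $p = r+s-3$, all second-coordinate pairings from Theorem \ref{teorema1} are already in $W_{\mu,\kappa}$, and all first-coordinate pairings with tops of dimension larger than $r+s-2$ have been recorded as well. This yields both the temporal framework for proving the listed edges appear, and an induction framework for the uniqueness direction.

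\textbf{Existence of the listed edges.} For each of cases (1), (2a), (2b), I would pinpoint the responsible instruction $\mathcal{P}(r+s-3, 1, u, (A,B)-_1 u)$ with $u \in \{b_1-1,\mu,\overline{a}\}$ respectively, and check that both $(A,B)$ and $(A,B)-_1 u$ remain available at that moment. That $(A,B)$ is not a top from any earlier second-coordinate pairing follows directly from Theorem \ref{teorema1} being inapplicable (in case (1) because $s=1$, and in cases (2a)/(2b) because (\ref{conten}) fails). To rule out any earlier first-coordinate subprocess $\mathcal{P}(r+s-3, 1, u', \cdot)$ with $u'>u$ having paired $(A,B)$, the key observation is that such a $u'\in\{A\}$ must satisfy $u'>b_s$, and then $(A,B)-_1 u'$ has already been claimed as a bottom via the second-coordinate edge $(A,B)-_1 u' \nearrow (A-\{u'\}, B \cup \{u'\})$ coming from Theorem \ref{teorema1} (the hypothesis $\langle u'+1,\mu\rangle \subseteq \{A\}-\{u'\}$ follows from (\ref{conten}) in case (1), or from $(b.1)$ together with $u' \leq \mu-1$ in case (2)). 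Availability of $(A,B)-_1 u$ itself is verified by excluding a second-coordinate enlargement (blocked by the size cap $s = \mu-\kappa$ in case (2), or by the structure of $\{A\}$ in case (1)) and a first-coordinate enlargement to a larger top (ruled out inductively via the specific hypotheses of each case).

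\textbf{Uniqueness.} Conversely, assume $(A,B)-_1 u \nearrow (A,B)$ lies in $W_{\mu,\kappa}$. Its existence forces $(A,B)$ to be available at the corresponding moment, so Theorem \ref{teorema1} forces $s=1$ or (\ref{conten}) to fail. When $s \geq 2$ (and hence (\ref{conten}) fails), I would further argue $s = \mu - \kappa$, as otherwise an enlarging second-coordinate pair $(A,B)\nearrow(A,B\cup\{v\})$ covered by Theorem \ref{teorema1} would already have made $(A,B)$ a bottom in $\mathcal{P}(r+s-2, 2)$. The dichotomy $a_r=\mu$ vs.\ $a_r<\mu$ then separates (2a) from (2b), and in (2b) the value $u = \overline{a}$ emerges from the $u$-loop dynamics after discarding all larger candidates as in the existence argument. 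When $s = 1$, I would analogously argue that (\ref{conten}) must hold and $b_1-1 \in \{A\}$: were either to fail, an inductive application of case (1) to the potential top $(A \cup \{b_1-1\}, B)$ (or to an analogous first-coord enlargement) would already have placed $(A,B)$ as a bottom of an edge in $\mathcal{P}(r-1, 1)$, contradicting its availability.

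\textbf{Main obstacle.} The most delicate piece is case (2b). Simultaneously verifying the availability of $(A,B)-_1 \overline{a}$ and of $(A,B)$ at the responsible instruction requires careful inductive use of the very classification being proved, blending conditions $(b.1)$--$(b.3)$ with the algorithm's execution order. The bookkeeping --- best organized by induction on $\dim(A,B)$ compatibly with the process order --- is the principal technical challenge, and it parallels, with significantly more branching, the analogous Farley--Sabalka classifications in the graph-braid setting.
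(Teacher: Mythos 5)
Your proposal has a genuine gap in case (2b), precisely the case you flag as the "main obstacle." To rule out an earlier first-coordinate pairing $(A,B)-_1u'\nearrow(A,B)$ with $u'>\overline{a}$, you claim $(A,B)-_1u'$ is already a bottom of the second-coordinate edge $(A,B)-_1u'\nearrow(A-\{u'\},B\cup\{u'\})$ from Theorem~\ref{teorema1}. This fails for two reasons. First, in case (2) the hypothesis $s=\mu-\kappa$ makes $|B\cup\{u'\}|=\mu-\kappa+1$, so $(A-\{u'\},B\cup\{u'\})$ violates~(\ref{acotado}) and is not a cell of $X_{\mu,\kappa}$ at all. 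Second, even setting the size cap aside, Theorem~\ref{teorema1} would require $\langle u'+1,\mu\rangle\subseteq\{A\}-\{u'\}$; but $(b.1)$ only gives $\langle b_s+1,\mu-1\rangle\subseteq\{A\}$, and since $a_r<\mu$ we have $\mu\notin\{A\}$, so the inclusion fails. Your parenthetical "$(b.1)$ together with $u'\leq\mu-1$" cannot repair this, because $\mu$ is genuinely in the required range. The paper handles this step differently: it shows such $u'$ satisfy $b_s<u'<\mu$, and then invokes the already-proved part (2a), which gives $(A,B)-_1u'\nearrow(A,B)-_1u'+_1\mu$ as a directed edge in $W_{\mu,\kappa}$ (the cell $(A,B)-_1u'$ has $\max<\mu$ and $\langle b_s+1,\mu-1\rangle\not\subseteq\{A\}-\{u'\}$, so adding $\mu$ back yields a top to which item (2a) applies). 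So the bootstrapping must go through item (2a), not Theorem~\ref{teorema1}.

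A secondary, smaller point: for the "Furthermore" (uniqueness) clause, you propose a direct argument constraining an arbitrary first-coordinate edge. The paper instead defers this to Proposition~\ref{lascriticas}, proving it by checking that every proper face of each cell listed in Corollary~\ref{celdascriticas} already participates in one of the described edges. Your route is logically viable in outline, but note that the uniqueness step as you sketch it ("the value $u=\overline{a}$ emerges from the $u$-loop dynamics after discarding all larger candidates as in the existence argument") leans on the same flawed discard step, so it inherits the gap above. Once the discard step is corrected to use item (2a), your direct route and the paper's Corollary~\ref{celdascriticas}/Proposition~\ref{lascriticas} route become two valid organizations of essentially the same case analysis.
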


Note that, if $b_s<\mu-1$ in $(b)$, then $(b.1)$ forces $a_r=\mu-1$. Yet, it might just as well be the case that $b_s=\mu-1$ in item $(b)$, in which case $(b.1)$ is vacuously true.

\begin{proof}
Let $(A,B)$ be a cell in $X_{\mu,\kappa}$ satisfying $r>1$ together with the conditions in item \emph{(1)} of the theorem. In particular $B=b$ with $b=b_1$. First, we assert that~$(A,b)$ is not redundant. Indeed, no directed edge $(A,b)\nearrow(A,b)+_2u$ can lie in $W_{\mu,\kappa}$, in view of Theorem~\ref{teorema1} (as $u<b-1$ is forced). On the other hand, before some directed edge
\begin{equation}\label{imposible3}
(A,b)\nearrow(A,b)+_1u
\end{equation}
can be constructed, necessarily by $\mathcal{P}(r-1,1,u,(A,b))$ and with $u<b-1$ (the latter condition in view of the hypotheses in~\emph{(1)}), the earlier instruction $\mathcal{P}(r-1,1,b-1,(A,b)+_1u-_1(b-1))$ would have to asses the pair
$$
\left(\rule{0mm}{3.5mm} (A,b)+_1u-_1(b-1)\,,\,(A,b)+_1u\right),
$$
both of whose entries are found to be available at that moment: the second one because it is supposed to stay available up to $\mathcal{P}(r-1,1,u,(A,b))$, and the first one in view of Theorem~\ref{teorema1}, which rules out a possible insertion of a vertex in the second entry, and because of the hypotheses in~\emph{(1),} which rule out a possible earlier insertion of a vertex in the first entry. This prevents~(\ref{imposible3}) and completes the argument for the non-redundancy of $(A,b)$. Then, in order to prove that the directed edge $(A,b)-_1(b-1)\nearrow(A,b)$ lies in $W_{\mu,\kappa}$, it remains to show that $(A,b)-_1(b-1)$ and $(A,b)$ remain available, the former one as non-redundant and the latter one as non-collapsible, up to the moment of executing $\mathcal{P}(r-2,1,b-1,(A,b)-_1(b-1))$. Taking into account the current hypotheses, the task amounts to ruling out the following two types of directed edges from lying in $W_{\mu,\kappa}$:
\begin{itemize}
\item $(A,b)-_1w\nearrow(A,b)$, with $b<w$;
\item $(A,b)-_1(b-1)\nearrow(A,b)-_1(b-1)+_2w$, with $w<b$.
\end{itemize}
Theorem~\ref{teorema1} accounts for the latter option. On the other hand, the former option cannot hold because, for $b<w\in\{A\}$, the directed edge $(A,b)-_1w\nearrow(A,b)-_1w+_2w$ lies in~$W_{\mu,\kappa}$, in view of Theorem~\ref{teorema1}. This completes the proof of item \emph{(1)} of the theorem.

\medskip
For the rest of the proof, let $(A,B)$ with $r>1$ and $s=\mu-\kappa$, but so that~(\ref{conten}) fails (in particular $b_s<\mu$).

\smallskip\noindent 
{\bf Case $a_r=\mu$}. We show below that $(A,B)$ is not redundant. Furthermore, Theorem~\ref{teorema1} rules out a directed edge of the form $(A,B)-_2b_i\nearrow(A,B)$ from lying in $W_{\mu,\kappa}$. Therefore, the first instruction that could make $(A,B)$ lose its availability condition is $\mathcal{P}(r+s-3,1,\mu,(A,B)-_1\mu)$, which assesses the pair $\left((A,B)-_1\mu,(A,B)\right)$ and constructs the asserted directed edge $(A,B)-_1\mu\nearrow(A,B)$. Indeed, just as $(A,B)$, $(A,B)-_1\mu$ is available at the moment of executing $\mathcal{P}(r+s-3,1,\mu,(A,B)-_1\mu)$ because of the hypothesis $s=\mu-\kappa$. It thus remains to show that $(A,B)$ is not redundant. Assume, for a contradiction, that a directed edge
\begin{equation}\label{imposible4}
(A,B)\nearrow(A,B)+_ju
\end{equation}
lies in $W_{\mu,\kappa}$. The hypotheses $s=\mu-\kappa$ and $a_r=\mu$ force $j=1$ and $u<\mu$, respectively. As in similar arguments above, the contradiction comes from observing that, before~(\ref{imposible4}) is constructed by $\mathcal{P}(r+s-2,1,u,(A,B))$, the earlier instruction $\mathcal{P}(r+s-2,1,\mu,(A,B)+_1u-_1\mu)$, which is in charge of assessing the pair
\begin{equation}\label{imposible5}
\left(\rule{0mm}{3.5mm} (A,B)+_1u-_1\mu\,,\,(A,B)+_1u \right),
\end{equation}
constructs the directed edge $(A,B)+_1u-_1\mu\nearrow(A,B)+_1u$. Indeed, at the moment of executing $\mathcal{P}(r+s-2,1,\mu,(A,B)+_1u-_1\mu)$, the second entry in~(\ref{imposible5}) is available,  as it actually stays available up to $\mathcal{P}(r+s-2,1,u,(A,B))$, while the hypothesis $s=\mu-\kappa$ gives the corresponding availability of the first entry in~(\ref{imposible5}). This completes the proof of item \emph{(a)} of the theorem.

\medskip\noindent 
{\bf Case $a_r<\mu$ with $(b.1)$--$(b.3)$ holding}. Start by noticing that, under the current hypotheses, Theorem~\ref{teorema1} implies that
\begin{equation}\label{sucede}
\mbox{\emph{the directed edge $\,(A,B)+_1\mu-_2b_s\nearrow(A,B)+_1\mu\,$ lies in $\,W_{\mu,\kappa}$.}}
\end{equation}
(Note that $s=\mu-\kappa>1$, in view of~(\ref{initialbounds})). Next, we prove that $(A,B)$ is not redundant. Suppose, for a contradiction, that a directed edge
\begin{equation}\label{imposible6}
(A,B)\nearrow((A,B)+_ju)
\end{equation}
lies in $W_{\mu,\kappa}$. The hypothesis $s=\mu-\kappa$ forces $j=1$, while $(b.1)$--$(b.3)$ force either $u=\mu$ or~$u<\overline{a}$. The contradiction then comes from observing that option $u=\mu$ is impossible in view of~(\ref{sucede}), while option $u<\overline{a}$ is impossible for, in such a case, before $\mathcal{P}(r+s-2,1,u,(A,B))$ can construct~(\ref{imposible6}), the earlier instruction $\mathcal{P}(r+s-2,1,\overline{a},(A,B)+_1u-_1\overline{a})$, which assesses the pair
\begin{equation}\label{imposible7}
\left( (A,B)+_1u-_1\overline{a}\,,\, (A,B)+_1u\rule{0mm}{3.5mm}\right),
\end{equation}
constructs the directed edge $(A,B)+_1u-_1\overline{a}\nearrow(A,B)+_1u$. Indeed, at the moment of executing $\mathcal{P}(r+s-2,1,\overline{a},(A,B)+_1u-_1\overline{a})$, the second entry in~(\ref{imposible7}) is available, as it is supposed to stay available up to $\mathcal{P}(r+s-2,1,u,(A,B))$. Likewise, in view of the hypothesis $s=\mu-\kappa$, the only way in which the corresponding availability of the first entry in~(\ref{imposible7}) could fail is with the directed edge $(A,B)+_1u-_1\overline{a}\nearrow (A,B)+_1u-_1\overline{a}+_1\mu$ lying in $W_{\mu,\kappa}$. But such a possibility is ruled out by Theorem~\ref{teorema1}, which shows that, just as~(\ref{sucede}), the directed edge $$(A,B)+_1u-_1\overline{a}+_1\mu-_2b_2\nearrow (A,B)+_1u-_1\overline{a}+_1\mu$$ lies in $W_{\mu,\kappa}$. This completes the argument for the non-redundancy of $(A,B)$. Then, the proof of item~\emph{(b)} of the theorem will be complete as soon as we argue that $(A,B)-_1\overline{a}$ and $(A,B)$ are available, the former one as non-redundant and the latter one as non-collapsible, at the moment of executing the instruction $\mathcal{P}(r+s-3,1,\overline{a},(A,B)-_1\overline{a})$. Taking into account Theorem~\ref{teorema1} and the current hypotheses, the task amounts to ruling out the following two types of directed edges from lying in~$W_{\mu,\kappa}$:
\begin{itemize}
\item $(A,B)-_1w\nearrow(A,B)$, with $b_s<w\in\{a_1,a_2,\ldots,a_r\}$ (in particular $w<\mu$);
\item $(A,B)-_1\overline{a}\nearrow(A,B)-_1\overline{a}+_1\mu$.
\end{itemize}
The former one is ruled out by item \emph{(a)} of the theorem under argumentation, which has already been proved to give $(A,B)-_1w\nearrow(A,B)-_1w+_1\mu\hspace{.4mm}$ as a directed edge lying in $W_{\mu,\kappa}$. The latter one is ruled out by Theorem~\ref{teorema1}, which shows that the directed edge $(A,B)-_1\overline{a}+_1\mu-_2b_s \nearrow (A,B)-_1\overline{a}+_1\mu$ lies in $W_{\mu,\kappa}$. This completes the proof of item \emph{(b)} of the theorem.

\smallskip
The final assertion in this theorem is equivalent to the statement of Proposition~\ref{lascriticas} below, and will be proved after recording two straightforward consequences of the explicit description of directed edges in Theorems~\ref{teorema1} and~\ref{teorema2}.
\end{proof}
  
\begin{corollary}\label{celdasredundantes}
The redundant cells $(A,B)$ associated to the directed edges in Theorems~\ref{teorema1} and~\ref{teorema2} fall in one of the following cases (with labels according to the relevant items in those two theorems): 
\begin{description}
\item[(\ref{teorema1})] If $s<\mu-\kappa$ and~(\ref{conten}) fails, then 
$(A,B)\nearrow(A,B)+_2\max\left(\langle b_s+1,\mu\rangle-\{A\}\rule{0mm}{3.5mm}\right)$ is a directed edge in $W_{\mu,\kappa}$.
\item[(\ref{teorema2}.1)] If $r<\mu-\kappa$, $s=1$, $1\leq b_1-1\not\in\{A\}$ and~(\ref{conten}) holds, then $(A,B)\nearrow(A,B)+_1(b_1-1)$ is a directed edge in $W_{\mu,\kappa}$.
\item[(\ref{teorema2}.2.a)] If $r<\mu-\kappa=s$, $a_r<\mu$ and $\langle b_s+1,\mu-1\rangle\not\subseteq\{A\}$, then $(A,B)\nearrow(A,B)+_1\mu$ is a directed edge in $W_{\mu,\kappa}$. (Note that in this case~(\ref{conten}) fails.)
\item[(\ref{teorema2}.2.b)] If $r<\mu-\kappa=s$, $\max\{a_r,b_s\}<\mu$ and
\begin{itemize}
\item[] $ (c.1) \;\; \langle b_s+1,\mu-1\rangle\subseteq\{A\}$;
\item[] $ (c.2) \;\; \langle1,b_s-1\rangle\not\subseteq\{B\};$
\item[] $ (c.3) \;\; \overline{a}:=\max\left(\rule{0mm}{3.4mm}\langle1,b_s-1\rangle-\{B\}\right)\not\in\{A\},$\end{itemize}
then $(A,B)\nearrow(A,B)+_1\overline{a}$ is a directed edge in $W_{\mu,\kappa}$. (Note that in this case~(\ref{conten}) fails.)
\end{description}
\end{corollary}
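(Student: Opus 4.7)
The plan is to regard the corollary as a direct translation of Theorems \ref{teorema1} and \ref{teorema2} from the language of the collapsible (upper) endpoint to that of the redundant (lower) endpoint of each directed edge. Theorems \ref{teorema1} and \ref{teorema2} already enumerate all directed edges of $W_{\mu,\kappa}$ via conditions on the upper cell $(A',B')$; the corollary repackages the same list in terms of the lower cell $(A,B)$. The argument accordingly splits into two halves: for each of the four cases listed in the corollary one must (a) produce the appropriate $(A',B')=(A,B)+_j v$ and verify that its hypotheses match those of the relevant theorem, thereby exhibiting the asserted edge, and (b) show, starting from an arbitrary edge as given by Theorems \ref{teorema1}--\ref{teorema2}, that the redundant endpoint $(A,B)$ falls into exactly one of the four cases.

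For case (\ref{teorema1}) of the corollary, the hypotheses $s<\mu-\kappa$ and the failure of (\ref{conten}) for $(A,B)$ guarantee that $u:=\max\left(\langle b_s+1,\mu\rangle-\{A\}\right)$ is well-defined and that $(A',B'):=(A,B)+_2 u$ is a legal cell of $X_{\mu,\kappa}$. By maximality of $u$, one has $\langle u+1,\mu\rangle\subseteq\{A\}=\{A'\}$, so $(A',B')$ satisfies (\ref{conten}), and Theorem \ref{teorema1} applies to yield the desired edge. Conversely, an edge $(A',B')-_2 b'_{s'}\nearrow(A',B')$ coming from Theorem \ref{teorema1} has $s=s'-1<\mu-\kappa$, fails (\ref{conten}) for $(A,B)$ since $b'_{s'}\in\langle b_s+1,\mu\rangle-\{A\}$, and its removed vertex $b'_{s'}$ realizes the maximum of this set thanks to (\ref{conten}) for $(A',B')$. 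The three remaining cases are handled similarly by taking $(A',B')=(A,B)+_1(b_1-1)$, $(A,B)+_1\mu$, and $(A,B)+_1\overline{a}$, respectively; in each of them $B'=B$, so $b'_{s'}=b_s$, and conditions $(c.1)$--$(c.3)$ translate directly into $(b.1)$--$(b.3)$ of Theorem \ref{teorema2} after the observation that $(c.3)$'s ``$\overline{a}\notin\{A\}$'' becomes ``$\overline{a}\in\{A'\}$'' once $\overline{a}$ is inserted into the first coordinate, and that the insertion of a vertex $<b_s$ does not disturb the relevant subset relations involving $\langle b_s+1,\mu-1\rangle$.

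The only real delicacy is checking mutual exclusivity, which guarantees that each directed edge produced by Theorems \ref{teorema1}--\ref{teorema2} lands in a uniquely determined case of the corollary: case (\ref{teorema1}) is separated from cases (\ref{teorema2}.2.a) and (\ref{teorema2}.2.b) by the strict inequality $s<\mu-\kappa$ versus the equality $s=\mu-\kappa$; case (\ref{teorema1}) is separated from case (\ref{teorema2}.1) by whether (\ref{conten}) fails or holds for $(A,B)$; and cases (\ref{teorema2}.2.a) and (\ref{teorema2}.2.b) are distinguished by whether the subset condition $\langle b_s+1,\mu-1\rangle\subseteq\{A\}$ holds. No substantial obstacle is expected; the argument is essentially bookkeeping, the principal care being to keep straight the distinction between variables referring to $(A,B)$ and those referring to $(A',B')$.
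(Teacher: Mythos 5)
Your proposal is correct and supplies exactly the bookkeeping the paper leaves implicit: Corollary~\ref{celdasredundantes} is presented without proof as a ``straightforward consequence'' of the explicit edge descriptions in Theorems~\ref{teorema1} and~\ref{teorema2}, and your translation from the collapsible endpoint $(A',B')$ to the redundant endpoint $(A,B)=(A',B')-_jv$, together with the converse verification and the mutual-exclusivity check, is the intended argument.
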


\begin{corollary}\label{celdascriticas}
The cells $(A,B)$ that are not involved in any of the directed edges described in Theorems~\ref{teorema1} and~\ref{teorema2} are listed below.
\begin{enumerate}
\item All cases where~(\ref{conten}) holds have $s=1$ and one of:
\begin{enumerate}[(i)]
\item $1=b_1$.
\item $1\leq b_1-1\in\{A\}$ and $\,r=1$.
\item $1\leq b_1-1\not\in\{A\}$ and $\,r=\mu-\kappa$.
\end{enumerate}
\item All cases where~(\ref{conten}) fails (so $b_s<\mu$) have $s=\mu-\kappa$ and one of:
\begin{enumerate}[(i)]\addtocounter{enumii}{3}
\item $a_r=\mu$ and $\,r=1$.
\item $a_r<\mu$, $\langle b_s+1,\mu-1\rangle\not\subseteq\{A\}$ and $r=\mu-\kappa$.
\item $a_r<\mu$, $\langle b_s+1,\mu-1\rangle\subseteq\{A\}$ and $\,\langle1,b_s-1\rangle\subseteq\{B\}$.
\item $a_r<\mu$, $\langle b_s+1,\mu-1\rangle\subseteq\{A\}$, $\langle1,b_s-1\rangle\not\subseteq\{B\}$ with
$\,\overline{a}:=\max\left(\langle1,b_s-1\rangle-\{B\}\right)\in\{A\}$, and $\,r=1$. 
\item $a_r<\mu$, $\langle b_s+1,\mu-1\rangle\subseteq\{A\}$, $\langle1,b_s-1\rangle\not\subseteq\{B\}$ with
$\,\overline{a}:=\max\left(\langle1,b_s-1\rangle-\{B\}\right)\not\in\{A\}$, and $\,r=\mu-\kappa$.
\end{enumerate}
\end{enumerate}
\end{corollary}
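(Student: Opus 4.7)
The plan is to derive Corollary \ref{celdascriticas} as a bookkeeping consequence of Theorems \ref{teorema1} and \ref{teorema2} together with Corollary \ref{celdasredundantes}. Granting the completeness assertion at the end of Theorem \ref{teorema2} (the content of Proposition \ref{lascriticas}, to be proved separately), every directed edge of $W_{\mu,\kappa}$ arises by inserting a vertex either on the second coordinate (covered by Theorem \ref{teorema1}, together with its completeness statement, already proved in that theorem) or on the first coordinate (covered by Theorem \ref{teorema2}). Hence a cell $(A,B)$ is critical if and only if it occurs neither as a head in Theorem \ref{teorema1}, \ref{teorema2}(1), \ref{teorema2}(2.a), or \ref{teorema2}(2.b), nor as a tail in any of the four items of Corollary \ref{celdasredundantes}. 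The enumeration then proceeds by a dichotomy on whether condition (\ref{conten}) holds, subdivided further in each branch.

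In the branch where (\ref{conten}) holds, Theorem \ref{teorema1} pairs $(A,B)$ with its face $(A,B)-_2b_s$ whenever $s>1$, so any critical cell in this branch has $s=1$. Writing $B=\{b_1\}$, Theorem \ref{teorema2}(1) renders $(A,B)$ collapsible precisely when $r>1$ and $b_1-1\in\{A\}$ (which forces $b_1\geq 2$), while Corollary \ref{celdasredundantes}(\ref{teorema2}.1) renders $(A,B)$ redundant precisely when $r<\mu-\kappa$ and $1\leq b_1-1\notin\{A\}$. A split on the three mutually exclusive possibilities ($b_1=1$; $b_1\geq 2$ with $b_1-1\in\{A\}$; $b_1\geq 2$ with $b_1-1\notin\{A\}$) then reproduces subcases (i)--(iii).

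In the branch where (\ref{conten}) fails (so $b_s<\mu$), Corollary \ref{celdasredundantes}(\ref{teorema1}) pairs $(A,B)$ with its coface $(A,B)+_2\max(\langle b_s+1,\mu\rangle-\{A\})$ whenever $s<\mu-\kappa$, so any critical cell in this branch has $s=\mu-\kappa$. We then iterate three further dichotomies, in the order suggested by the hypotheses of Theorem \ref{teorema2}(2) and Corollary \ref{celdasredundantes}(\ref{teorema2}.2): first on whether $a_r=\mu$; within $a_r<\mu$, on whether $\langle b_s+1,\mu-1\rangle\subseteq\{A\}$; when this holds, on whether $\langle 1,b_s-1\rangle\subseteq\{B\}$; and when this fails, on whether $\overline{a}\in\{A\}$. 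Each leaf of the resulting decision tree is covered by exactly one of Theorem \ref{teorema2}(2.a)--(2.b) or Corollary \ref{celdasredundantes}(\ref{teorema2}.2.a)--(2.b), with conclusion contingent on $r>1$ (for collapsibility) or $r<\mu-\kappa$ (for redundancy). Reading off the residual constraint on $r$ at each leaf yields subcases (iv)--(viii).

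The real content lies not in this corollary but in the completeness clause of Theorem \ref{teorema2}, where the algorithmic intricacies of Section \ref{optgrafie} are marshalled to certify that no further directed edges arise in $W_{\mu,\kappa}$. One housekeeping remark: subcase (i) is in fact vacuous under the standing hypothesis $\kappa\geq 2$, since combining $b_1=1$ with (\ref{conten}) forces $\{A\}\supseteq\langle 2,\mu\rangle$ and hence $r\geq\mu-1>\mu-\kappa$; retaining it in the list preserves the logically clean partition based on $b_1$ and the membership of $b_1-1$ in $\{A\}$, and the resulting seven non-vacuous subcases correspond bijectively to the seven items of Theorem \ref{maintheoremclass}.
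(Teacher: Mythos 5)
Your enumeration is correct and matches the straightforward bookkeeping that the paper implicitly relies on when it records Corollary~\ref{celdascriticas} without a separate proof: a dichotomy on whether~(\ref{conten}) holds, then further case splits reading the hypotheses off Theorems~\ref{teorema1}--\ref{teorema2} and Corollary~\ref{celdasredundantes}, with the residual constraint on $r$ at each leaf; your observation that subcase \emph{(i)} is vacuous also agrees with the remark opening the paper's proof of Proposition~\ref{lascriticas}. One small logical point: you need not ``grant'' Proposition~\ref{lascriticas} here, since the corollary as stated only classifies cells absent from the \emph{explicitly described} directed edges (not critical cells of $W_{\mu,\kappa}$), and in the paper's order of deduction Proposition~\ref{lascriticas} is established \emph{after} this corollary, using the enumeration as input; invoking it would therefore be circular if the corollary actually depended on it.
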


\begin{proposition}\label{lascriticas}
All cells listed in Corollary~\ref{celdascriticas} are critical, i.e., Theorems~\ref{teorema1} and~\ref{teorema2} describe the complete gradient field $W_{\mu,\kappa}$.
\end{proposition}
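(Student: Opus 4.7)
The plan is to show that any directed edge $(A,B) \nearrow (A,B)+_1 u$ of $W_{\mu,\kappa}$ arising from insertion on the first coordinate must match one of the three types described in Theorem~\ref{teorema2}. Together with the complete characterization of $j=2$ edges already established in Theorem~\ref{teorema1}, this implies that the lists in Theorems~\ref{teorema1}--\ref{teorema2} describe the totality of directed edges in $W_{\mu,\kappa}$, so that the cells in Corollary~\ref{celdascriticas}, by construction not involved in any of these edges, must be critical.

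To prove this, assume $(A,B) \nearrow (A,B)+_1 u$ is constructed at instruction $\mathcal{P}(r+s-2, 1, u, (A,B))$ and apply the preemption technique systematically used in the proofs of Theorems~\ref{teorema1} and~\ref{teorema2}. Since process $\mathcal{P}(r+s-2, 2, \cdot)$ runs before $\mathcal{P}(r+s-2, 1, \cdot)$, any earlier $j=2$ edge involving $(A,B)$ or $(A,B)+_1 u$ would preempt the construction of our assumed edge. This immediately forces one of two top-level scenarios: either condition~(\ref{conten}) fails and $s = \mu - \kappa$ (otherwise Corollary~\ref{celdasredundantes}(Theorem~\ref{teorema1}) would give an earlier edge $(A,B) \nearrow (A,B)+_2 v$ making $(A,B)$ unavailable), or (\ref{conten}) holds and $s = 1$ (otherwise Theorem~\ref{teorema1} would give $(A,B)-_2 b_s \nearrow (A,B)$, rendering $(A,B)$ collapsible).

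In the first scenario, secondary branching on whether $a_r = \mu$ and on conditions $(b.1)$--$(b.3)$, each backed by further preemption arguments (for instance, if $(A,B)+_1 u$ would satisfy (\ref{conten}) with $s = \mu-\kappa > 1$, then the edge $(A,B)+_1 u -_2 b_s \nearrow (A,B)+_1 u$ from Theorem~\ref{teorema1} would preempt the $j=1$ construction), forces $u$ into the specific values prescribed by Theorem~\ref{teorema2}(2.a) or (2.b). In the second scenario, with $s = 1$, an analogous case split on whether $b_1 - 1 \in \{A\}$ and on the value of $r$ forces the edge to have $u = b_1 - 1$ and $r > 1$, matching Theorem~\ref{teorema2}(1); otherwise $(A,B)$ would itself be redundant via Corollary~\ref{celdasredundantes}(Theorem~\ref{teorema2}.1), again contradicting its availability at the assumed moment.

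The main obstacle is the careful combinatorial bookkeeping of the resulting case tree, whose branches correspond one-to-one with the subcases of Corollary~\ref{celdascriticas}(1) and (2). The proof strategy parallels precisely the preemption arguments used in Theorems~\ref{teorema1}--\ref{teorema2}, but applied in the contrapositive direction: instead of producing specific edges, we show that no edges beyond those already listed can be constructed by the algorithm. A minor side task is flagging that some branches of the case tree (notably case (i) of Corollary~\ref{celdascriticas}(1), which requires $\{A\} = \langle 2, \mu\rangle$) turn out to be vacuous under the standing assumption $\kappa \geq 2$ in~(\ref{initialbounds}); this is consistent with, and does not affect, the completeness of the characterization.
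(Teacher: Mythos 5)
Your strategy is sound but genuinely different from the paper's. You prove the completeness claim directly, arguing in the ``forward'' direction: assume the algorithm constructs an unlisted edge $(A,B)\nearrow(A,B)+_1u$ and use preemption (an earlier instruction would have consumed one endpoint) to force $u$ into the forms of Theorem~\ref{teorema2}. The paper instead makes a short ``backward'' reduction first: since $W_{\mu,\kappa}$ is a discrete field, every cell participates in at most one edge, so any edge missing from the lists in Theorems~\ref{teorema1}--\ref{teorema2} would have to join two cells from the enumeration in Corollary~\ref{celdascriticas}. It then suffices to verify, for each of the eight types of cells in that corollary, that every codimension-one face is already paired by one of the described edges---a check carried out directly from the constructive parts of Theorems~\ref{teorema1} and~\ref{teorema2} without any further preemption reasoning. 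The paper's route buys a much smaller case tree (only the Corollary's cell types, and for each only its faces), while your route reproduces in the contrapositive the same kind of algorithmic timing analysis already used to prove Theorems~\ref{teorema1} and~\ref{teorema2}. Both are viable; if you pursue yours, the ``secondary branching'' step must be carried out in full---for instance, ruling out a hypothetical edge emanating from a type-\emph{(iv)} cell $(\mu\y B)$ requires tracing which instruction pairs $(\mu,u\y B)$ and when, which is precisely the bookkeeping the paper's reduction avoids. Your observation that case~\emph{(i)} is vacuous matches the paper's (where it is noted that $(23\cdots\mu,1)$ is not a cell of $X_{\mu,\kappa}$ under~(\ref{initialbounds})).
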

\begin{proof}
Any directed edge in $W_{\mu,\kappa}$ that could be missing in the descriptions of Theorems~\ref{teorema1} and~\ref{teorema2} would have to involve two of the cells described in Corollary~\ref{celdascriticas}. Therefore, it suffices to show that any $(p-1)$-dimensional face of a $p$-dimensional cell in Corollary~\ref{celdascriticas} is involved in a directed edge of Theorems~\ref{teorema1} and~\ref{teorema2}. The situation for cells in \emph{(i)} and \emph{(ii)} is elementary. Indeed, the only possible cell in \emph{(i)} is $(23\ldots\mu,1)$, which is not a cell of~$X_{\mu,\kappa}$ in view of~(\ref{initialbounds}) and~(\ref{acotado}). On the other hand, the only cell in~\emph{(ii)} is $(\mu-1,\mu)$, which has no proper faces. All other instances are analyzed next on a case-by-case basis.
\begin{itemize}
\item The situation for a cell $(A,b)$ in~\emph{(iii)} follows by noticing that
$$\begin{array}{ll}
(A,b)-_1u\nearrow(A,b)-_1u+_2u,        & \mbox{ for $b<u$, in view of Theorem~\ref{teorema1};} \\
(A,b)-_1u\nearrow(A,b)-_1u+_1(b-1), & \mbox{ for $u<b-1$, in view of Theorem~\ref{teorema2}(1).}
\end{array}$$

\item A cell $(\mu,B)$ in~\emph{(iv)} has $b_s<\mu-1$, as~(\ref{conten}) fails, so the desired property follows by noticing that $(\mu,B)-_2u\nearrow(\mu,B)-_2u+_2(\mu-1),$ in view of Theorem~\ref{teorema1}.

\item The situation for a cell $(A,B)$ in \emph{(v)} follows by noticing that 
$$\begin{array}{ll}
(A,B)-_2u\nearrow(A,B)-_2u+_2\mu,        & \mbox{in view of Theorem~\ref{teorema1};} \\
(A,B)-_1u\nearrow(A,B)-_1u+_1\mu, & \mbox{in view of Theorem~\ref{teorema2}(2.a).}
\end{array}$$

\item The only possible cell in \emph{(vi)} is $(A,B)=((\mu-\kappa+1)\cdots(\mu-1),1\cdots(\mu-\kappa))$, which lies in $X_{\mu,\kappa}$ only when $2\kappa\leq\mu+1$. In such a case, the desired property follows by noticing that
$$\begin{array}{ll}
(A,B)-_2u\nearrow(A,B)-_2u+_2\mu,        & \mbox{in view of Theorem~\ref{teorema1};} \\
(A,B)-_1u\nearrow(A,B)-_1u+_1\mu, & \mbox{in view of Theorem~\ref{teorema2}(2.a).}
\end{array}$$

\item The situation for a cell $(a,B)$ in \emph{(vii)} follows by noticing that $(a,B)-_2u\nearrow(a,B)-_2u+_2\mu$, in view of Theorem~\ref{teorema1}.

\item The situation for a cell $(A,B)$ in \emph{(viii)} follows by noticing that 
$$\begin{array}{ll}
(A,B)-_2u\nearrow(A,B)-_2u+_2\mu,        & \mbox{in view of Theorem~\ref{teorema1};} \\
(A,B)-_1u\nearrow(A,B)-_1u+_1\mu, & \mbox{if $b_s<u$, in view of Theorem~\ref{teorema2}(2.a);} \\
(A,B)-_1u\nearrow(A,B)-_1u+_1\overline{a}, & \mbox{if $u<\overline{a}$, in view of Theorem~\ref{teorema2}(2.b).}
\end{array}\vspace{-.9cm}$$
\end{itemize}
\end{proof}

\subsection{Counting critical cells}
We next take a closer look at the critical cells as listed in Corollary~\ref{celdascriticas}. We have observed that (due to our preparatory assumptions~(\ref{initialbounds}) and ~(\ref{acotado})) there are no critical cells fitting in item \emph{(i)}, and that the only critical cell of type \emph{(ii)} is $(\mu-1,\mu)$, which is the only critical 0-dimensional cell. A similar situation holds for cells of type \emph{(vi)}. Namely, as observed in the previous proof, the only possible such cell is  
\begin{equation}\label{topcell}
((\mu-\kappa+1)(\mu-\kappa+2)\cdots(\mu-1),12\cdots(\mu-\kappa)),
\end{equation}
which lies in dimension $\mu-3$, and which is a cell of $X_{\mu,\kappa}$ if and only if $2\kappa-1\leq\mu$.
 
\begin{proposition}\label{rangoestable}
There is a total of $\hspace{.3mm}2\binom{\mu-1}{\mu-\kappa}\hspace{-.5mm}$ critical cells of types either (iii), (iv) or (vii), all of which have dimension $\mu-\kappa-1$. Furthermore, critical cells of types (v) or~(viii) have dimension $2(\mu-\kappa-1)$ and appear only when $\,\mu<2\kappa-1$.
\end{proposition}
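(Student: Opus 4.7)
The approach is to leverage the explicit classification in Corollary~\ref{celdascriticas}. The dimension statements are immediate from the values of $(r,s)$ in each type: $(r,s)=(\mu-\kappa,1)$ in type~(iii), and $(r,s)=(1,\mu-\kappa)$ in types~(iv) and~(vii), all give $\dim(A,B)=r+s-2=\mu-\kappa-1$, whereas $r=s=\mu-\kappa$ in types~(v) and~(viii) gives $\dim(A,B)=2(\mu-\kappa-1)$.

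The counting of types~(iii), (iv), (vii) breaks into three enumerations that I would tackle separately, each reducing to a binomial sum. For type~(iii), parameterize by $b=b_1$: the containment $\langle b+1,\mu\rangle\subseteq\{A\}$ together with $|A|=\mu-\kappa$ forces $b\geq\kappa$, and after placing the forced elements of $A$, the remaining $b-\kappa$ entries are chosen freely from $\langle1,b-2\rangle$ (using $b-1\notin\{A\}$); a hockey-stick summation yields $\binom{\mu-1}{\kappa-1}$ cells. Type~(iv) is immediate: $A=\{\mu\}$, and the failure of~(\ref{conten}) together with $r=1$ forces $b_s\leq\mu-2$, so $B$ is an arbitrary $(\mu-\kappa)$-subset of $\langle1,\mu-2\rangle$, yielding $\binom{\mu-2}{\mu-\kappa}$ cells. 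Type~(vii), with $A=\{a\}$ and $a<\mu$, is the subtlest: the containment $\langle b_s+1,\mu-1\rangle\subseteq\{a\}$ splits into $b_s=\mu-1$ and the degenerate possibility $(b_s,a)=(\mu-2,\mu-1)$, the latter being vacuous because it would force $\overline{a}\leq\mu-3\neq a$; in the surviving case, $\overline{a}=a$ entails $\langle a+1,\mu-1\rangle\subseteq\{B\}$, and choosing the remaining $a-\kappa+1$ elements of $B$ from $\langle1,a-1\rangle$ gives $\binom{a-1}{\kappa-2}$ cells for each $a\in\langle\kappa-1,\mu-2\rangle$; another hockey-stick sum collapses this to $\binom{\mu-2}{\kappa-1}$. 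Adding the three contributions and applying Pascal's identity yields $2\binom{\mu-1}{\kappa-1}=2\binom{\mu-1}{\mu-\kappa}$.

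For types~(v) and~(viii), I would argue by a cardinality obstruction: the hypotheses force $A,B\subseteq\langle1,\mu-1\rangle$ (using $a_r<\mu$ and the failure of~(\ref{conten}), which imposes $b_s<\mu$), and they additionally exhibit an element of $\langle1,\mu-1\rangle$ lying outside $A\cup B$ ---any witness of $\langle b_s+1,\mu-1\rangle\not\subseteq\{A\}$ in case~(v), which cannot lie in $B$ since $b_s=\max B$; and $\overline{a}$ itself in case~(viii), which is outside $A$ by hypothesis and outside $B$ by the definition of $\overline{a}$. Hence $|A|+|B|=2(\mu-\kappa)<\mu-1$, equivalently $\mu<2\kappa-1$. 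The main obstacle in the whole argument is the case split in type~(vii), where the interplay between $\langle b_s+1,\mu-1\rangle\subseteq\{A\}$ with $|A|=1$ and the requirement $\overline{a}\in\{A\}$ must be unpacked carefully; everything else is bookkeeping with Pascal and hockey-stick identities.
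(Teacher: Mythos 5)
Your proof is correct and follows essentially the same route as the paper's: a direct enumeration of each of the three $(\mu-\kappa-1)$-dimensional types using Corollary~\ref{celdascriticas}, followed by the cardinality obstruction $\{A\}\cup\{B\}\subsetneq\langle 1,\mu-1\rangle$ for types (v) and (viii). The paper dismisses the three binomial counts as "elementary counting," so your hockey-stick derivations (and the careful vacuity check for the degenerate subcase $(b_s,a)=(\mu-2,\mu-1)$ of type (vii)) are a welcome expansion of what the paper leaves implicit, but the approach is the same.
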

\begin{proof}
Elementary counting gives that there are:
\begin{itemize}
\item $\binom{\mu-1}{\mu-\kappa}$ critical cells of type \emph{(iii);}
\item $\binom{\mu-2}{\mu-\kappa}$ critical cells of type \emph{(iv);}
\item $\binom{\mu-2}{\mu-\kappa-1}$ critical cells of type \emph{(vii)}.
\end{itemize}
This makes a total of $\binom{\mu-1}{\mu-\kappa}+\binom{\mu-2}{\mu-\kappa}+\binom{\mu-2}{\mu-\kappa-1}=\binom{\mu-1}{\mu-\kappa}+\binom{\mu-1}{\mu-\kappa}$ critical cells, all of which clearly have dimension $\mu-\kappa-1$. For the second assertion of the proposition observe that a critical cell $(A,B)$ of type \emph{(v)} or \emph{(viii)} satisfies a strict inclusion
\begin{equation}\label{strinc}
\{A\}\cup\{B\}\subset\langle1,\mu-1\rangle.
\end{equation}
which follows from the condition $\langle b_s+1,\mu-1\rangle\not\subseteq \{A\}$, in the case of \emph{(v)}, and from the condition $\max(\langle1,b_s-1\rangle-\{B\})\not\in\{A\}$, in the case of \emph{(viii)}. The result follows since~(\ref{strinc}) holds only with $2(\mu-\kappa)<\mu-1$.
\end{proof}

\begin{figure}
\begin{center}\begin{tabular}{|c|c|c|} 
\hline \rule{0mm}{4mm}  
type & existence & dimension \\ [2pt] \hline \rule{0mm}{4mm}
\emph{(i)} & never & \\ [2pt] \hline \rule{0mm}{4mm}
\emph{(ii)} & always & 0 \\ [2pt] \hline \rule{0mm}{4mm}
\emph{(iii)} & always & $\mu-\kappa-1$ \\ [2pt] \hline \rule{0mm}{4mm}
\emph{(iv)} & always & $\mu-\kappa-1$ \\ [2pt] \hline \rule{0mm}{4mm}
\emph{(vii)} & always & $\mu-\kappa-1$ \\ [2pt] \hline \rule{0mm}{4mm}
\emph{(v)} & $\mu<2\kappa-1$ & $2(\mu-\kappa-1)$ \\ [2pt] \hline \rule{0mm}{4mm}
\emph{(viii)} & $\mu<2\kappa-1$ & $2(\mu-\kappa-1)$ \\ [2pt] \hline \rule{0mm}{4mm}
\emph{(vi)} & $\mu\geq2\kappa-1$ & $\mu-3$ \\ [2pt] \hline
\end{tabular}\end{center}
\caption{Critical cells by type, dimension and condition granting their existence}
\label{distri}
\end{figure}

\begin{proof}[Proof of Theorem~\ref{maintheoremlarge}]
Recall we have neglected the two elementary cases in item~\emph{(\ref{it2})} so, in terms of the notational adjustments in Section~\ref{crigradflo},~(\ref{initialbounds}) holds. Item \emph{(\ref{it1})} then follows from Theorem~\ref{ftdmt} and the summary in Figure~\ref{distri}. For item~\emph{(\ref{it3})}, where $\kappa=2$, use in addition Proposition~\ref{rangoestable}. Likewise, the numeric hypothesis in item~\emph{(\ref{it4})}, which becomes $\kappa\geq3$ and $2\kappa-1\leq\mu$, yields~(\ref{untop}), while the numeric hypothesis in item~\emph{(\ref{it5})} is $\mu<2\kappa-1$ and leads to~(\ref{mutop}) once we count the number of critical cells in dimension $2(\mu-\kappa-1)$. Counting details are provided by Proposition~\ref{cuenta2d} below.

Apart from the second assertion in item~\emph{(\ref{it4})}, whose proof is given in the next section, the only missing argument is the one about the minimality of the cell structures in~(\ref{untop}) and~(\ref{mutop}), i.e., the precise values of the relevant Betti numbers. In both cases it suffices to show the vanishing of the corresponding Morse differentials~(\ref{morseboundary}). Such a condition follows from sparseness, provided $d<m-3$ in the case of~(\ref{untop}), and provided $d>1$ in the case of~(\ref{mutop}). Non-sparse situations are dealt with next.

The case $1<d=m-3$ in~(\ref{untop}) will follow once we prove the second assertion in item~\emph{(\ref{it4})} ---in the next section. On the other hand, the case $1=d=m-3$ in~(\ref{untop}) corresponds to $\DF(\Delta^{4,1},2)$, in which case the triviality of the Morse differential follows from Abrams' homeomorphism in Remark~\ref{abramsobs}. Lastly, the case $d=1$ in~(\ref{mutop}) follows from Proposition~\ref{Betticompletegraphs} below.
\end{proof}

\begin{proposition}\label{cuenta2d}
For $\mu<2k-1$, the number of critical cells in dimension $2(\mu-\kappa-1)$ is
\begin{equation}\label{fea}
\binom{\mu-1}{\mu-\kappa}\binom{\kappa-1}{\mu-\kappa}-\sum_{b=\kappa}^{\mu-1}\;\sum_{a=b-(\mu-\kappa)}^{b-1}\binom{a-1}{b-\kappa}\binom{a-b+\kappa-1}{\mu-\kappa-b+a}.
\end{equation}
\end{proposition}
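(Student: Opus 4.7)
The plan is to enumerate the critical cells of dimension $2(\mu-\kappa-1)$ by identifying a pool that contains all of them and subtracting the non-critical members. First, inspecting Figure~\ref{distri} and Proposition~\ref{rangoestable}, I observe that in the range $\mu<2\kappa-1$ the only critical cells in dimension $2(\mu-\kappa-1)$ are those of types~\emph{(v)} and~\emph{(viii)} of Corollary~\ref{celdascriticas}. Both types share the skeleton $r=s=\mu-\kappa$ with $a_r<\mu$ and $b_s<\mu$, so the pool of candidates consists of pairs $(A,B)$ of disjoint subsets of $\langle1,\mu-1\rangle$ of size $\mu-\kappa$ each; straightforward counting gives $\binom{\mu-1}{\mu-\kappa}\binom{\kappa-1}{\mu-\kappa}$, which is the leading term of~(\ref{fea}).

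Next, I would classify the non-critical members of this pool. A pool cell fails to be of type~\emph{(v)} precisely when $\langle b_s+1,\mu-1\rangle\subseteq\{A\}$; among such cells, criticality (via type~\emph{(viii)}) further requires that $\overline{a}:=\max(\langle1,b_s-1\rangle-\{B\})$ be defined and satisfy $\overline{a}\not\in\{A\}$. The alternative possibility $\langle1,b_s-1\rangle\subseteq\{B\}$ (which would leave $\overline{a}$ undefined) is excluded in the regime $\mu<2\kappa-1$ by a size comparison: it would force $|A|+|B|\geq(\mu-1-b_s)+b_s=\mu-1>2(\mu-\kappa)$. Hence the non-critical pool members are exactly the pairs $(A,B)$ satisfying $\langle b_s+1,\mu-1\rangle\subseteq\{A\}$ and $\overline{a}\in\{A\}$.

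The crux is to enumerate this non-critical set. I would parametrize it by $b:=b_s$ and $a:=\overline{a}$, with $1\leq a<b\leq\mu-1$. Fixing $(a,b)$, the defining conditions force $\{a\}\cup\langle b+1,\mu-1\rangle\subseteq\{A\}$ and $\langle a+1,b\rangle\subseteq\{B\}$, after which the remaining entries of both sets must be drawn from the leftover region $\langle1,a-1\rangle$. Placing first the $b-\kappa$ additional elements of $A$ inside $\langle1,a-1\rangle$ gives $\binom{a-1}{b-\kappa}$ choices; the $\mu-\kappa-(b-a)$ remaining elements of $B$ must then come from the $a-b+\kappa-1$ surviving slots of $\langle1,a-1\rangle$, giving $\binom{a-b+\kappa-1}{\mu-\kappa-b+a}$ choices. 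The effective index ranges $\kappa\leq b\leq\mu-1$ and $b-(\mu-\kappa)\leq a\leq b-1$ emerge from non-vanishing of the binomials, producing the double sum in~(\ref{fea}).

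The subtle point will be verifying that the parametrization realizes the type~\emph{(viii)} conditions exactly. I would check that by construction $\overline{a}=a$ rather than merely $\overline{a}\geq a$, because $\langle a+1,b-1\rangle\subseteq\{B\}$ and $a\not\in\{B\}$; and $b=\max\{B\}$ holds because the remaining entries of $B$ lie strictly below $a<b$. Once these compatibility checks are in place, subtracting the enumerated non-critical count from the pool produces~(\ref{fea}).
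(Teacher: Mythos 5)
Your proposal is correct and follows essentially the same approach as the paper's proof: identify the pool of cells with $r=s=\mu-\kappa$ and $a_r,b_s<\mu$ (giving the leading binomial product), observe that the hypothesis $\mu<2\kappa-1$ rules out the case $\langle1,b_s-1\rangle\subseteq\{B\}$ by a size comparison, and then parametrize the non-critical pool cells by $(a,b)=(\overline{a},b_s)$ using the same schematic decomposition of $\langle1,\mu-1\rangle$ into forced and free slots. The extra compatibility checks you flag at the end (that $\overline{a}=a$ and $\max\{B\}=b$) are implicit in the paper's schematic picture, so you have spelled out slightly more detail, but the argument is the same.
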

\begin{proof}
The cells under consideration are those of types \emph{(v)} and \emph{(viii)} in Corollary~\ref{celdascriticas}. Note that $\binom{\mu-1}{\,\mu-\kappa\,}\binom{\kappa-1}{\,\mu-\kappa\,}$ counts the number of cells $(A,B)$ satisfying the conditions
\begin{equation}\label{universo}
a_r<\mu>b_s \text{ \ and \ } r=\mu-\kappa=s,
\end{equation}
which are common to \emph{(v)} and \emph{(viii)}. The result will follow once we explain how the double summation in~(\ref{fea}) counts the cardinality of the set $\mathcal{C}$ of cells satisfying~(\ref{universo}) without being of type \emph{(v)} or \emph{(viii)}.

Note that cells of $\mathcal{C}$ are described by the conditions
\begin{equation}\label{complemento}
\mbox{$a_r<\mu>b_s$, $\;\;r=\mu-\kappa=s$, $\;\;\langle b_s+1,\mu-1\rangle\subseteq A$}
\end{equation}
 and either $\langle1,b_s\rangle=B$ or, else, $\max(\langle1,b_s\rangle-B)\in A$. However, the hypothesis $\mu<2\kappa-1$ implies that condition $\langle1,b_s\rangle=B$ is incompatible with those in~(\ref{complemento}). Therefore, cells of $\mathcal{C}$ are really described by
 $$
 \mbox{$a_r<\mu>b_s$, $\;\;r=\mu-\kappa=s$, $\;\;\langle b_s+1,\mu-1\rangle\subseteq A$, $\;\;\langle1,b_s\rangle\not\subseteq B$, $\;\;\overline{a}:=\max(\langle1,b_s\rangle-B)\in A$.}
$$
The latter conditions are schematized as
$$
1\;\;\cdots\;\;(\overline{a}-1)\;\underbrace{\overline{a}\raisebox{-1mm}{}}_{\text{in $A$}}\;\underbrace{(\overline{a}+1)\;\;\cdots\;\;(b_s-1)\;\;b_s}_{\text{$b_s-\overline{a}$ elements in $B$}}\;\;\underbrace{(b_s+1)\;\;\cdots\;\;(\mu-1)}_{\text{$\mu-1-b_s$ elements in $A$}},
$$
which makes it clear that the double summation in~(\ref{fea}) counts the cells in $\mathcal{C}$ (the index $b$ in the outer summation stands for $b_s$, and the index $a$ in the inner summation stands for $\overline{a}$).
\end{proof}

\begin{proposition}\label{Betticompletegraphs}
Assume $\mu<2\kappa-1$ and $\mu-\kappa-1=1$. Then $2\binom{\mu-1}{\mu-\kappa}$ and~(\ref{fea}) give, respectively, the first and second betti numbers of the 0-connected 2-dimensional space $X_{\mu,\kappa}$.
\end{proposition}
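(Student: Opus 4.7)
The hypothesis $\mu - \kappa - 1 = 1$ forces $\kappa = \mu - 2$, so that $\Delta^{\mu-1, \mu-\kappa-1} = K_\mu$ is the complete graph on $\mu$ vertices, and the condition $\mu < 2\kappa - 1$ becomes $\mu \geq 6$. By Corollary~\ref{celdascriticas} and Proposition~\ref{rangoestable}, the field $W_{\mu,\kappa}$ has one critical 0-cell (of type (ii)), $c_1 = 2\binom{\mu-1}{2}$ critical 1-cells (of types (iii), (iv), (vii)), and $c_2$ critical 2-cells given by~(\ref{fea}) (of types (v) and (viii)). By Theorem~\ref{ftdmt}, these are the cells of a CW-model for $X_{\mu,\kappa}$, whose integral homology is computed by the associated Morse chain complex.

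The first step is to show that the Morse differential out of $C_1$ vanishes identically. This is automatic: $X_{\mu,\kappa}$ is path-connected and has a unique critical 0-cell, so a nonzero differential into $C_0$ would give $H_0$ a rank strictly less than $1$, contradicting connectedness. Consequently,
\begin{equation*}
\beta_1(X_{\mu,\kappa}) = c_1 - \mathrm{rank}(\partial_2), \qquad \beta_2(X_{\mu,\kappa}) = c_2 - \mathrm{rank}(\partial_2),
\end{equation*}
and the proposition reduces to showing $\partial_2 = 0$.

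For this final step, the plan is to invoke the Betti number calculation for $\DF(K_\mu,2)$ carried out in~\cite{algorithm1}, which establishes $\beta_1(\DF(K_\mu,2)) = 2\binom{\mu-1}{2}$. Combined with the Morse inequality $\beta_1 \leq c_1 = 2\binom{\mu-1}{2}$ (immediate once $\partial_1 = 0$), this forces $\mathrm{rank}(\partial_2) = 0$, hence $\partial_2 = 0$, and both Betti numbers agree with the corresponding critical-cell counts.

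I expect the main obstacle to be verifying that \cite{algorithm1} supplies the first Betti number in precisely the form $2\binom{\mu-1}{2}$; this should reduce to a routine binomial identity. Should a self-contained argument be preferred, one would instead trace the gradient paths in $W_{\mu,\kappa}$ from the codimension-1 faces of each critical 2-cell (types (v) and (viii)) to the critical 1-cells (types (iii), (iv), (vii)) and verify cancellation of their signed contributions via~(\ref{sgns})--(\ref{incidencealamorse}). The combinatorial structure of $W_{\mu,\kappa}$ fully recorded in Theorems~\ref{teorema1} and~\ref{teorema2} makes such a path-tracking analysis feasible, though intricate.
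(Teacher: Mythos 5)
Your proposal is correct, and it is broadly similar in spirit to the paper's proof: both ultimately outsource the Betti number computation to an external reference rather than performing the ``self-contained'' gradient-path cancellation you sketch at the end. The paper cites Farber and Hanbury~\cite{MR2745669} (Proposition~21 and equations~(1), (2) there) and checks, via a short binomial computation, that \emph{both} published Betti numbers of $\DF(K_\mu,2)$ agree with the critical-cell counts $c_1$ and $c_2$; since $\DF(K_\mu,2)\simeq\F(K_\mu,2)$ by the deformation retraction noted in the introduction, this matching forces the Morse differential to vanish. Your version is organized slightly more economically: you first dispose of $\partial_1$ using connectedness and the uniqueness of the critical $0$-cell, so that only the \emph{first} Betti number needs to be imported externally to conclude $\mathrm{rank}(\partial_2)=0$ and hence $\beta_2=c_2$. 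This spares you the second half of the paper's ``straightforward calculation''. The one caveat, which you flag yourself, is the reliance on~\cite{algorithm1} supplying $\beta_1(\DF(K_\mu,2))=2\binom{\mu-1}{2}$ in exactly this form; the paper sidesteps any such concern by going directly to~\cite{MR2745669}, which is the more robust citation. Either route is fine, and the logical core — reduce to showing $\partial_2=0$ by comparing critical-cell counts with an independently known Betti number — is shared.
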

\begin{proof}
Under the current hypothesis, $X_{\mu,\kappa}$ is the discrete configuration space of two ordered points in the complete graph $K_\mu$ on $\mu$ vertices, with $\mu\geq5$. The result then follows, after a straightforward calculation, from Proposition~21 and equations~(1) and~(2) in~\cite{MR2745669}.
\end{proof}

\section{Gradient flow}\label{homtopviamorse}
We now prove the second assertion in item~\emph{(\ref{it4})} of Theorem~\ref{maintheoremlarge} so, throughout this section, we assume $1<d=m-3$, i.e., $\kappa=3$ and $\mu\geq6$. We also use the more legible notation $a_1,\etc,a_r\y b_1,\etc,b_s$ as an alternative for the cell $(a_1\cdots a_r,b_1\cdots b_s)$ in~(\ref{acotado}). Thus, the top ($\mu-3$)-dimensional critical cell~(\ref{topcell}) in $X_{\mu,\kappa}$ is
\begin{equation}\label{arriba}
\mu-2,\mu-1\y1,\etc,\mu-3,
\end{equation}
while an easy check (using the descriptions in Corollary~\ref{celdascriticas}) gives that the ($\mu-4$)-dimensional critical cells are
\begin{itemize}
\item[\emph{(iii)}] $1,\etc,\widehat{i},\etc,\widehat{b-1},\widehat{b},\etc,\mu\y b$, for $1\leq i\leq b-2$ and $b\leq\mu$,
\item[\emph{(iv)}] $\mu\y1,\etc,\widehat{i},\etc\mu-2$, for $1\leq i\leq \mu-2$,
\item[\emph{(vii)}] $a\y1,\etc,\widehat{i},\etc,\widehat{a},\etc,\mu-1$, for $1\leq i<a\leq\mu-2$,
\end{itemize}
where a hat is used to indicate removal, and item enumeration follows the one used in Corollary~\ref{celdascriticas}.

\medskip
The homotopy triviality of the relevant attaching map $S^{\mu-4}\to\bigvee_{}S^{\mu-4}$ can be verified homologically (as $\mu\geq6$). By~(\ref{morseboundary}), we will be done once the following fact is proved:

\begin{proposition}\label{cruxofthematter}
Fix a $(\mu-4)$-dimensional critical cell $\alpha$. Taking into account multiplicity and incidence numbers as in~(\ref{incidencealamorse}), the total number of gradient paths from faces of~(\ref{arriba}) to $\alpha$ is zero.
\end{proposition}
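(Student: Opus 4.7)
The plan is to explicitly describe the gradient flow issuing from each codim-1 face of the top cell~(\ref{arriba}), to reduce the problem to paths issuing from the two first-coord-deletion faces, and then to exhibit path-level cancellations.

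I first identify the $\mu-1$ codim-1 faces of~(\ref{arriba}): the two first-coord deletions $F^A_1=\mu-1\y 1,\etc,\mu-3$ (which removes $\mu-2$) and $F^A_2=\mu-2\y 1,\etc,\mu-3$ (which removes $\mu-1$), and the $\mu-3$ second-coord deletions $F^B_i=\mu-2,\mu-1\y 1,\etc,\widehat{i},\etc,\mu-3$ for $1\le i\le\mu-3$. Using the sign rule $[\alpha:\partial_{r,j}(\alpha)]=(-1)^{p_1+\cdots+p_{r-1}+j}$ of Section~\ref{optgrafie}, their incidences in the boundary of~(\ref{arriba}) are $+1$, $-1$, and $(-1)^{i}$, respectively. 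By Corollary~\ref{celdasredundantes}, each such face is redundant, with Morse pairings $F^A_j\nearrow F^A_j+_1\mu$ (via rule~(\ref{teorema2}.2.a)) and $F^B_i\nearrow F^B_i+_2\mu$ (via rule~(\ref{teorema1})); each Morse partner adjoins the unique vertex $\mu$ missing from~(\ref{arriba}).

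The first main step is to observe that the gradient paths issuing from each $F^B_i$ die at the very first descent. Every codim-1 face of the Morse partner $F^B_i+_2\mu$ other than $F^B_i$ itself is collapsible: the faces $v\y 1,\etc,\widehat{i},\etc,\mu-3,\mu$ obtained by deleting a vertex from the first coord satisfy~(\ref{conten}) vacuously (since $b_s=\mu$) and so are collapsible by Theorem~\ref{teorema1}, and the faces $\mu-2,\mu-1\y 1,\etc,\widehat{i},\widehat{j},\etc,\mu-3,\mu$ obtained by deleting a second-coord vertex $j\ne i$ also satisfy~(\ref{conten}) vacuously with $s\ge 2$, and are collapsible by the same theorem. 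Since a gradient path cannot continue past a collapsible cell, no path from $F^B_i$ ever reaches a critical cell, and thus the $F^B_i$-contributions to~(\ref{incidencealamorse}) all vanish.

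The task reduces to showing that for each $(\mu-4)$-dimensional critical cell $\alpha$, the combined contribution of gradient paths from $F^A_1$ and $F^A_2$ is zero. I would trace the two gradient trees systematically: at each descent the flow branches over which vertex is removed, and Corollaries~\ref{celdascriticas} and~\ref{celdasredundantes} allow one to read off directly whether the resulting cell is critical (the path ends there), collapsible (the path dies), or redundant (the path continues via the rule-prescribed $\nearrow$). By induction on the depth of the descent tree, the reachable critical cells fall among those of types (iii), (iv), and (vii), and one obtains a complete catalogue of paths to each target together with their multiplicities~(\ref{sgns}). The opposite incidence signs $\pm 1$ of $F^A_1, F^A_2$ drive the cancellation: when $\alpha$ is reached by both trees (as in the example $\alpha=\mu\y 1,\etc,\mu-3$, where a unique length-one path of multiplicity $+1$ arises from each $F^A_j$), the two contributions are opposite and cancel; when $\alpha$ is reached only by one of them, the multiple paths from that face come in pairs of opposite multiplicities, as one can check in the case $\mu=6$, where the two paths $F^A_1\to 6\y 2,3,4$ of lengths $2$ and $4$ contribute $-1$ and $+1$.

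The main obstacle is turning this observation into a systematic sign-reversing matching on the set of paths into each $\alpha$. Because the multiplicity~(\ref{sgns}) is an alternating product of local incidences whose signs depend on the evolving position of each inserted or deleted vertex, one must organize the paths so that the sign of every factor can be tracked between paired paths. My plan is to group the paths to a given $\alpha$ by the first descent step at which any two diverge and to pair them by interchanging the immediate insertion--deletion events at that step; a careful case analysis of which cells along the way are redundant (and with which Morse partners, via Theorems~\ref{teorema1} and~\ref{teorema2}) should then certify that the resulting involution reverses signs, completing the proof of Proposition~\ref{cruxofthematter} and with it the second assertion of item~(\ref{it4}) of Theorem~\ref{maintheoremlarge}.
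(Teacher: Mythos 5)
Your setup is correct and even contains a genuinely nice simplification. You correctly identify the $\mu-1$ codimension-one faces of the top cell~(\ref{arriba}), their incidence signs, and their Morse partners (each adjoining the vertex $\mu$). Your observation that every branch issuing from a second-coordinate-deletion face $F^B_i$ dies after the single step $F^B_i\nearrow F^B_i+_2\mu$ is correct: each codim-1 face of $F^B_i+_2\mu$ other than $F^B_i$ has $b_s=\mu$ and $s\geq 2$, so is collapsible by Theorem~\ref{teorema1} and cannot continue a gradient path (nor be a critical endpoint). This reduces the problem to the paths out of $F^A_1$ and $F^A_2$, which is a cleaner reduction than anything explicitly stated in the paper. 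Your two worked examples (the $i=\mu-2$ type-\emph{(iv)} target, and the $\mu=6$, $i=1$ case) are also consistent with the actual gradient flow.

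However, the proof is not complete, and you say so yourself. The entire substance of Proposition~\ref{cruxofthematter} is the verification, for \emph{every} $(\mu-4)$-dimensional critical cell $\alpha$ of types \emph{(iii)}, \emph{(iv)} and \emph{(vii)}, that the signed sum of multiplicities of all gradient paths from $F^A_1,F^A_2$ to $\alpha$ vanishes. What you offer for that step is a program (``group the paths by the first descent step at which any two diverge and pair them by interchanging the immediate insertion--deletion events\ldots a careful case analysis\ldots should then certify that the resulting involution reverses signs''), together with two sample computations, not a proof. The difficulty you name — that multiplicity~(\ref{sgns}) is an alternating product whose local signs shift with the positions of the inserted/deleted vertices, so that any proposed pairing must be shown to reverse signs uniformly — is precisely where the paper invests its effort: it traverses \emph{backward} from each target critical cell, uses Lemmas~\ref{down} and~\ref{unica} (proved by induction on a tail-length parameter) to prune the branching drastically, and exhibits for each target either an empty tree (type \emph{(iii)}), an explicit sign-cancelling pair (type \emph{(iv)}), or exactly two explicit branches whose multiplicities $(-1)^{i+a}$ and $(-1)^{i+a+1}$ cancel (type \emph{(vii)}). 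Nothing in your proposal establishes that your forward trees have analogous structure, that the pairing you envision is well-defined (in particular that it is an involution on the full path set), or that it is sign-reversing in general rather than in the two examples you compute. Until that case analysis is actually carried out — and with it something playing the role of Lemmas~\ref{down} and~\ref{unica} to control the branching — the cancellation is asserted rather than proved.
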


For the proof, we think of gradient paths in Proposition~\ref{cruxofthematter} as \emph{lower} gradient paths from~(\ref{arriba}) to 1-cofaces $\beta$ of $\alpha$, i.e., with $\beta\searrow\alpha$, and make a \emph{backwards} description of the latter paths. For starters, no such $\beta$ is critical, in view of Theorem~\ref{maintheoremmaximal}. Furthermore, the lower paths we care about involve a collapsible $\beta$, say with $\alpha_1\nearrow\beta$. So, we repeat the description strategy now for any possible non-redundant 1-coface $\beta_1$ of $\alpha_1$, 
$$
\beta_1\searrow\alpha_1\nearrow\beta\searrow\alpha.
$$
This creates a tree $T_\alpha$ describing all possible paths in Proposition~\ref{cruxofthematter}, with each branch stopping as soon as we get a $\beta_k\searrow\alpha_k$ with $\beta_k$ the critical face~(\ref{arriba}). Much of the hard work in the analysis of $T_\alpha$ is greatly simplified by the next two auxiliary results.

\begin{lemma}\label{down}
There are no lower gradient paths
\begin{equation}\label{pth}
\alpha_0\searrow\beta_1\nearrow\alpha_1\searrow\cdots\searrow \beta_k\nearrow\alpha_k
\end{equation}
starting at the critical $\alpha_0=\mu-2,\mu-1\y1,\etc,\mu-3$ and ending at
\begin{enumerate}[(a)]
\item $\alpha_k=\mu-j+1,\mu-j+2,\etc,\mu\y1,\etc,\widehat{i},\etc,\mu-j$, \,if \,$3\leq j\leq\mu-3$ and $1\leq i\leq\mu-j$. 
\item $\alpha_k=\ell,\mu-j+1,\mu-j+2,\etc,\mu\y1,\etc,\widehat{i},\etc,\widehat{\ell},\etc,\mu-j$, \,if \,$2\leq j\leq\mu-4$ and $1\leq i<\ell\leq\mu-j$. 
\item $\alpha_k=i,\mu-j+1,\mu-j+2,\etc,\mu\y1,\etc,\widehat{i},\etc,\widehat{\ell},\etc,\mu-j$, \,if \,$2\leq j\leq\mu-4$ and $1\leq i<\ell\leq\mu-j$. 
\end{enumerate}
\end{lemma}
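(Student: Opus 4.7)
My plan is to exploit the fact that every cell $\alpha_i$ along a lower gradient path has $\dim(\alpha_i)=\mu-3$, hence $r+s=\mu-1$, so each $\alpha_i$ is missing exactly one vertex of $\langle 1,\mu\rangle$. The starting cell $\alpha_0$ is missing $\mu$, whereas every target in (a), (b), (c) contains $\mu$ in the first coordinate in a very specific configuration (a contiguous top-block $\{\mu-j+1,\ldots,\mu\}\subseteq A$, with or without an additional small element $\ell$ or $i$). The proof will track the evolution of both the missing vertex and the placement of $\mu$, and rule out paths leading to those configurations.

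First I would fully enumerate the initial transition $\alpha_0\searrow\beta_1\nearrow\alpha_1$. Since $\beta_1$ is obtained from $\alpha_0=\mu-2,\mu-1\y1,\ldots,\mu-3$ by removing a single vertex, there are only $\mu-1$ candidates. For each one, Corollary~\ref{celdasredundantes} either identifies $\beta_1$ as critical (ending the path) or determines the unique $\alpha_1$ via one of the four insertion rules. A direct case check shows that every possible $\alpha_1$ belongs to one of four explicit patterns: $(\mu-1,\mu\y1,\ldots,\mu-3)$ or $(\mu-2,\mu\y1,\ldots,\mu-3)$ obtained by applying rule (\ref{teorema2}.2.a) to insert $\mu$ into $A$, and $(\mu-2,\mu-1\y1,\ldots,\widehat{i},\ldots,\mu-3,\mu)$ or $(\mu-2,\mu-1\y1,\ldots,\mu-4,\mu)$ obtained by applying rule (\ref{teorema1}) to insert $\mu$ into $B$.

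Second, I would identify a structural invariant $I$ on cells such that $I(\alpha_0)$ and $I(\alpha_1)$ hold in each of the four initial patterns, $I$ is preserved by every transition $\alpha_i\searrow\beta_{i+1}\nearrow\alpha_{i+1}$, and $I$ fails at every target cell in (a), (b), (c). The natural candidate is a condition preventing the simultaneous presence of the contiguous top-block $\{\mu-j+1,\ldots,\mu\}\subseteq A$ together with the compatible $B$-side profile seen in the targets. The inductive step would use the explicit forms of the $\nearrow$ moves listed in Corollary~\ref{celdasredundantes} (rules (\ref{teorema1}), (\ref{teorema2}.1), (\ref{teorema2}.2.a), (\ref{teorema2}.2.b)) to trace how $A$ and $B$ can evolve after one arbitrary deletion followed by one of these four prescribed insertions.

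The main obstacle I anticipate is pinning down the correct invariant in a form that is both preserved by all four insertion patterns and violated by all three target families; the rules interlock through conditions on the relative order of $b_s$, $\mu$, and the inserted vertex, so preservation likely requires splitting into subcases according to which rule produces $\alpha_{i+1}$. If that direct forward analysis becomes too intricate, I would switch to a backward analysis: for each target $\alpha_k$, use Corollary~\ref{celdasredundantes} to classify the possible $\beta_k\nearrow\alpha_k$, then classify the possible $\alpha_{k-1}\searrow\beta_k$, and iterate, proving by induction on the backward-step count that the tight combinatorial structure of $\alpha_0$ can never appear as a predecessor. Either way, the argument is intrinsically combinatorial and the bulk of the work lies in the invariant-preservation (or backward-enumeration) step.
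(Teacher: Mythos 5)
Your proposal is a strategy outline rather than a proof; the central combinatorial step is left undone in both of the routes you sketch, and one of the two routes has a structural snag you have not addressed.

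Your primary plan (a forward invariant preserved by every transition and violated by every target) is not executed: you never identify the invariant. There is also a real obstruction to finding one. By Lemma~\ref{unica}, lower gradient paths \emph{do} exist from $\alpha_0$ to $\alpha_k=\mu-1,\mu\y1,\etc,\widehat{\ell},\etc,\mu-2$, which is exactly the family (a) with $j=2$. So any invariant that holds at $\alpha_0$ and is preserved along all transitions must also hold at these $j=2$ cells, and therefore must encode precisely the threshold $j\geq3$ (and $j\geq 2$ together with the extra element in (b)--(c)). That is a delicate, $j$-dependent condition, not the ``presence of the contiguous top block'' heuristic you name; moreover, proving preservation would require analyzing an arbitrary deletion followed by any of the four insertion rules applied to an a priori unconstrained cell, which is a larger and less structured search space than the targets themselves.

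Your fallback plan is the route the paper actually takes, but you stop at announcing it. The paper proves each of (a), (b), (c) by a backward induction on $\mu-3-j$ (resp.\ $\mu-4-j$). The last up-going edge $\beta_k\nearrow\alpha_k$ is forced by Theorem~\ref{teorema1} (deletion of $b_s$), and $\beta_k$ has exactly three codimension-1 cofaces other than $\alpha_k$. Two of them are immediately dismissed because they are redundant by Corollary~\ref{celdasredundantes}(\ref{teorema1}) (hence cannot appear as $\alpha_{k-1}$ in a lower path), and the remaining one is exactly a target of the same type with $j$ replaced by $j+1$, closing the induction; in the base case $j=\mu-3$ (resp.\ $j=\mu-4$) the unique candidate $\alpha_{k-1}$ is itself redundant. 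You would need to reproduce this enumeration — in particular, verify that two of the three cofaces are redundant at every inductive step and that the third one is the inductive instance, including the interplay where case (b) with $\ell=\mu-j$ reduces to case (a) and where (c) is handled symmetrically. As written, your proposal leaves that entire verification as ``the bulk of the work,'' so the proof is not complete.

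Finally, a small but substantive correction to your stated first step: after $\alpha_0\searrow\beta_1$, the partner $\alpha_1$ is indeed determined by Corollary~\ref{celdasredundantes}, and your four-pattern list for $\alpha_1$ is essentially right; but this forward step plays no role in the paper's argument, because the paper classifies \emph{predecessors} of the targets, not \emph{successors} of $\alpha_0$. Enumerating the first forward step does not constrain the later steps, which is precisely why the backward, $j$-indexed induction is the effective mechanism here.
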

\begin{proof}
\emph{(a)} We proceed by induction on the difference $\mu-3-j\geq0$, noticing that we can always assume $i<\mu-j$, for otherwise $\alpha_k$ is redundant in view of Corollary~\ref{celdasredundantes}(\ref{teorema1}). The induction starts with the case $j=\mu-3$, for which a potential path~(\ref{pth}) is ruled out as it would be forced to end in the form
\begin{equation}\label{empezando}
4,\etc,\mu\y1,2\searrow4,\etc,\mu\y1,\etc,\widehat{i},\etc,\widehat{3}\stackrel{\ref{teorema1}}\nearrow4,\etc,\mu\y1,\etc,\widehat{i},\etc,3=\alpha_k,
\end{equation}
with $4,\etc,\mu\y1,2$ redundant in view of Corollary~\ref{celdasredundantes}(\ref{teorema1}). Here and below, the number on top of an up-going directed edge is the label of the fact granting the edge. Furthermore, the reader should keep in mind that, as explained in the construction of the tree $T_\alpha$, this type of gradient paths \emph{must} be verified through a right-to-left reading, as we are restricting attention to paths landing on $\alpha_k$. Note that the down-going edge in~(\ref{empezando}) is forced because $\{4,\etc,\mu\}$ has maximal allowed cardinality; such a situation will repeat several times in the considerations below. Now, having clarified the previous points, note that, for $j<\mu-3$, a potential path~(\ref{pth}) is forced to end in either of the three forms
\begin{align*}
\left.\begin{array}{r}
\beta_1:=\mu-j+1,\etc,\mu\y1,\etc,\mu-j-1\\
\beta_2:=\mu-j,\etc,\mu\y1,\etc,\widehat{i},\etc,\mu-j-1\\
\beta_3:=i,\mu-j+1,\etc,\mu\y1,\etc,\widehat{i},\etc,\mu-j-1
\end{array}\right\}&\searrow\mu-j+1,\etc,\mu\y1,\etc,\widehat{i},\etc,\mu-j-1\\&\stackrel{\ref{teorema1}}\nearrow\mu-j+1,\etc,\mu\y1,\etc,\widehat{i},\etc,\mu-j=\alpha_k.
\end{align*}
But the first and third options are ruled out since $\beta_1$ and $\beta_3$ are redundant in view of Corollary~\ref{celdasredundantes}(\ref{teorema1}), while the second option is ruled out by induction.

\smallskip
The arguments for~\emph{(b)} and~\emph{(c)} have only minor variants. We give full details for completeness. 

\smallskip
\emph{(b)} We proceed by induction on the difference $\mu-4-j\geq0$, noticing that we can always assume $\ell<\mu-j$, for otherwise the result reduces to that in~\emph{(a)}. The induction starts with the case $j=\mu-4$, for which a potential path~(\ref{pth}) is ruled out as it would be forced to end in the form
$$
\ell,5,\etc,\mu\y1,\etc,\widehat{\ell},\etc,3
\searrow
\ell,5,\etc,\mu\y1,\etc,\widehat{i},\etc,\widehat{\ell},\etc,3
\stackrel{\ref{teorema1}}\nearrow
\ell,5,\etc,\mu\y1,\etc,\widehat{i},\etc,\widehat{\ell},\etc,4=\alpha_k,
$$
with $\ell,5,\etc,\mu\y1,\etc,\widehat{\ell},\etc,3$ redundant in view of Corollary~\ref{celdasredundantes}(\ref{teorema1}). Then, assuming $j<\mu-4$, a potential path~(\ref{pth}) is forced to end in either of the three forms
\begin{align*}
&\left.\begin{array}{r}
\beta_1:=i,\ell,\mu-j+1,\etc,\mu\y1,\etc,\widehat{i},\etc,\widehat{\ell},\etc,\mu-j-1\\
\beta_2:=\ell,\mu-j,\etc,\mu\y1,\etc,\widehat{i},\etc,\widehat{\ell},\etc\mu-j-1\\
\beta_3:=\ell,\mu-j+1,\etc,\mu\y1,\etc,\widehat{\ell},\etc,\mu-j-1
\end{array}\right\}\searrow\\
&\rule{80mm}{0mm}\searrow\ell,\mu-j+1,\etc,\mu\y1,\etc,\widehat{i},\etc,\widehat{\ell},\etc,\mu-j-1\\
&\rule{80mm}{0mm}\stackrel{\ref{teorema1}}\nearrow\ell,\mu-j+1,\etc,\mu\y1,\etc,\widehat{i},\etc,\widehat{\ell},\etc,\mu-j=\alpha_k.
\end{align*}
But the first and third options are ruled out since $\beta_1$ and $\beta_3$ are redundant in view of Corollary~\ref{celdasredundantes}(\ref{teorema1}), while the second option is ruled out by induction.

\smallskip
\emph{(c)} We proceed by induction on the difference $\mu-4-j\geq0$, noticing that we can always assume $\ell<\mu-j$, for otherwise $\alpha_k$ is redundant in view of Corollary~\ref{celdasredundantes}(\ref{teorema1}). The induction starts with the case $j=\mu-4$, for which a potential path~(\ref{pth}) is ruled out as it would be forced to end in the form
$$
i,5,\etc,\mu\y1,\etc,\widehat{i},\etc,3
\searrow
i,5,\etc,\mu\y1,\etc,\widehat{i},\etc,\widehat{\ell},\etc,3
\stackrel{\ref{teorema1}}\nearrow
i,5,\etc,\mu\y1,\etc,\widehat{i},\etc,\widehat{\ell},\etc,4=\alpha_k,
$$
with $i,5,\etc,\mu\y1,\etc,\widehat{i},\etc,3$ redundant in view of Corollary~\ref{celdasredundantes}(\ref{teorema1}). Then, assuming $j<\mu-4$, a potential path~(\ref{pth}) is forced to end in either of the three forms
\begin{align*}
&\left.\begin{array}{r}
\beta_1:=i,\ell,\mu-j+1,\etc,\mu\y1,\etc,\widehat{i},\etc,\widehat{\ell},\etc,\mu-j-1\\
\beta_2:=i,\mu-j,\etc,\mu\y1,\etc,\widehat{i},\etc,\widehat{\ell},\etc\mu-j-1\\
\beta_3:=i,\mu-j+1,\etc,\mu\y1,\etc,\widehat{i},\etc,\mu-j-1
\end{array}\right\}\searrow\\
&\rule{80mm}{0mm}\searrow i,\mu-j+1,\etc,\mu\y1,\etc,\widehat{i},\etc,\widehat{\ell},\etc,\mu-j-1\\
&\rule{80mm}{0mm}\stackrel{\ref{teorema1}}\nearrow i,\mu-j+1,\etc,\mu\y1,\etc,\widehat{i},\etc,\widehat{\ell},\etc,\mu-j=\alpha_k.
\end{align*}
But the first and third options are ruled out since $\beta_1$ and $\beta_3$ are redundant in view of Corollary~\ref{celdasredundantes}(\ref{teorema1}), while the second option is ruled out by induction.
\end{proof}

The case $j=2$ in item~\emph{(a)} of Lemma~\ref{down} is special:

\begin{lemma}\label{unica}
For $1\leq\ell\leq\mu-2$, there is a single lower gradient path~(\ref{pth}) starting at the critical face $\alpha_0=\mu-2,\mu-1\y1,\etc,\mu-3$ and ending at $\alpha_k=\mu-1,\mu\y1,\etc,\widehat{\ell},\etc,\mu-2$.
\end{lemma}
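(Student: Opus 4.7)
The plan is to work backwards from $\alpha_k$, determining $\beta_k,\alpha_{k-1},\beta_{k-1}$ in turn and showing at each stage that only one choice survives. The Morse pairing at $\alpha_k$ is uniquely prescribed by our classification: for $\ell=\mu-2$, Theorem~\ref{teorema2}(2.a) yields $\beta_k=\mu-1\y 1,\etc,\mu-3$; for $1\leq\ell\leq\mu-3$, Theorem~\ref{teorema1} yields $\beta_k=\mu-1,\mu\y 1,\etc,\widehat{\ell},\etc,\mu-3$.

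Next, $\alpha_{k-1}$ must be a $1$-coface of $\beta_k$ in $X_{\mu,\kappa}$, distinct from $\alpha_k$, and it must either be collapsible or equal $\alpha_0=T$. The cardinality bound $|B|\leq\mu-\kappa$ immediately discards the cofaces that insert $\mu-2$ or $\mu$ into the $B$-coordinate. In the case $\ell=\mu-2$ this leaves only $T$ itself and $\alpha_k$, and since $T$ is critical the path is forced to have length one. In the case $1\leq\ell\leq\mu-3$ three nontrivial cofaces survive: $\ell,\mu-1,\mu\y 1,\etc,\widehat{\ell},\etc,\mu-3$ (\emph{Option~A}), $\mu-2,\mu-1,\mu\y 1,\etc,\widehat{\ell},\etc,\mu-3$ (\emph{Option~B}), and $\mu-1,\mu\y 1,\etc,\mu-3$ (\emph{Option~C}). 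Option~A is redundant by Corollary~\ref{celdasredundantes}(\ref{teorema1}), hence not allowed as an $\alpha_{k-1}$. Option~B is precisely the type of endpoint excluded by Lemma~\ref{down}(a) with $j=3$, $i=\ell$, so any partial lower gradient path $T\searrow\cdots\nearrow\,$Option~B is impossible and this branch yields no contribution. Option~C is collapsible by Theorem~\ref{teorema2}(2.a) with paired face $\mu-1\y 1,\etc,\mu-3$; hence $\alpha_{k-1}$ is forced to be Option~C and $\beta_{k-1}=\mu-1\y 1,\etc,\mu-3$.

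The same coface enumeration at $\beta_{k-1}$, once the $B$-overflow is discarded, produces only $T$ and $\alpha_{k-1}$ as candidates for $\alpha_{k-2}$. Therefore $\alpha_{k-2}=T=\alpha_0$, so $k=2$, and the path coincides with
$$T\;\searrow\;\mu-1\y 1,\etc,\mu-3\;\nearrow\;\mu-1,\mu\y 1,\etc,\mu-3\;\searrow\;\beta_k\;\nearrow\;\alpha_k.$$

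The main obstacle is the middle step for $\ell\leq\mu-3$: one must correctly classify Options~A, B, C using Theorem~\ref{teorema1}, Theorem~\ref{teorema2}, and Corollary~\ref{celdasredundantes}, and then recognize Option~B as fitting the hypothesis of Lemma~\ref{down}(a). Once that single appeal to Lemma~\ref{down} closes the Option~B branch, the constrained coface enumeration at each level forces uniqueness.
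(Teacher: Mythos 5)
Your proposal is correct and follows essentially the same approach as the paper: identify the Morse-paired face $\beta_k$ of $\alpha_k$ via Theorems~\ref{teorema1} and~\ref{teorema2}(2.a), enumerate the cofaces of $\beta_k$ (three nontrivial ones for $\ell\leq\mu-3$), eliminate Option~A by Corollary~\ref{celdasredundantes}(\ref{teorema1}) and Option~B by Lemma~\ref{down}(a), and then show the surviving branch through Option~C is forced back to $\alpha_0$ in one more step. The only small imprecision is your blanket claim that the bound $|B|\leq\mu-\kappa$ discards the $B$-insertions of $\mu-2$ and $\mu$ at $\beta_k$ (this is accurate only for $\ell=\mu-2$; for $\ell\leq\mu-3$ the $B$-insertion of $\mu-2$ is excluded merely because it reproduces $\alpha_k$, and Option~C itself is a $B$-insertion), but your subsequent explicit enumeration of Options~A--C is correct, so the argument stands.
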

\begin{proof}
The result is easy for $\ell=\mu-2$, as the only such path~(\ref{pth}) is
\begin{equation}\label{solita}
\mu-2,\mu-1\y1,\etc,\mu-3
\searrow
\mu-1\y1,\etc,\mu-3
\stackrel{\ref{teorema2}(2.a)}\nearrow
\mu-1,\mu\y1,\etc,\mu-3=\alpha_k.
\end{equation}
For $\ell\leq\mu-3$, a potential path~(\ref{pth}) is forced to end in the form
\begin{align}
\left.\begin{array}{r}
\beta_1:=\ell,\mu-1,\mu\y1,\etc,\widehat{\ell},\etc,\mu-3\\
\beta_2:=\mu-2,\mu-1,\mu\y1,\etc,\widehat{\ell},\etc,\mu-3\\
\beta_3:=\mu-1,\mu\y1,\etc,\mu-3
\end{array}\right\}
&\searrow
\mu-1,\mu\y1,\etc,\widehat{\ell},\etc,\mu-3\nonumber\\
&\stackrel{\ref{teorema1}}\nearrow
\mu-1,\mu\y1,\etc,\widehat{\ell},\etc,\mu-2=\alpha_k.\label{solitabis}
\end{align}
But the first option is ruled out since $\beta_1$ is redundant in view of Corollary~\ref{celdasredundantes}(\ref{teorema1}), while the second option is ruled out in view of item~\emph{(a)} in Lemma~\ref{down}. The result then follows by noticing that the third option is forced to end up as in~(\ref{solita}).
\end{proof}

\begin{proof}[Proof of Proposition~\ref{cruxofthematter}]
Start by considering the case of a $(\mu-4)$-dimensional type-\emph{(iii)} critical face $\alpha=1,\etc,\widehat{i},\etc,\widehat{b-1},\widehat{b},\etc,\mu\y b$, with $1\leq i\leq b-2$ and $b\leq\mu$. Note that $\alpha$ is face of only two faces of dimension $\mu-3$, namely, $1,\etc,\widehat{i},\etc,\widehat{b-1},\widehat{b},\etc,\mu\y b-1,b$ and $1,\etc,\widehat{i},\etc,\widehat{b-1},\widehat{b},\etc,\mu\y i,b$. Therefore, the analysis of the tree $T_\alpha$ starts with what are the only two potential branches
\begin{align*}
1,\etc,\widehat{i},\etc,\widehat{b-1},\widehat{b},\etc,\mu\y i,b-1
&\searrow
1,\etc,\widehat{i},\etc,\widehat{b-1},\widehat{b},\etc,\mu\y b-1 \\
&\stackrel{\ref{teorema1}}\nearrow
1,\etc,\widehat{i},\etc,\widehat{b-1},\widehat{b},\etc,\mu\y b-1,b \\
&\searrow
1,\etc,\widehat{i},\etc,\widehat{b-1},\widehat{b},\etc,\mu\y b=\alpha
\end{align*}
and
\begin{align*}
1,\etc,\widehat{i},\etc,\widehat{b-1},\widehat{b},\etc,\mu\y i,b-1
&\searrow
1,\etc,\widehat{i},\etc,\widehat{b-1},\widehat{b},\etc,\mu\y i \\
&\stackrel{\ref{teorema1}}\nearrow
1,\etc,\widehat{i},\etc,\widehat{b-1},\widehat{b},\etc,\mu\y i,b \\
&\searrow
1,\etc,\widehat{i},\etc,\widehat{b-1},\widehat{b},\etc,\mu\y b=\alpha.
\end{align*}
But $1,\etc,\widehat{i},\etc,\widehat{b-1},\widehat{b},\etc,\mu\y i,b-1$ is redundant by Corollary~\ref{celdasredundantes}(\ref{teorema1}), so $T_\alpha$ is in fact empty.

\smallskip
Consider next the case of a $(\mu-4)$-dimensional type-\emph{(iv)} critical face $\alpha=\mu\y1,\etc,\widehat{i},\etc,\mu-2$, with $1\leq i\leq \mu-2$. As above, the analysis of the tree $T_\alpha$ starts with precisely two potential branches. For $i=\mu-2$, each of those two branches is linear and leads to an actual path in $T_\alpha$. Indeed, $T_\alpha$ reduces to the two branches
$$
\mu-2,\mu-1\y1,\etc,\mu-3
\searrow
\mu-2\y1,\etc,\mu-3
\stackrel{\ref{teorema2}(2.a)}\nearrow
\mu-2,\mu\y1,\etc,\mu-3
\searrow
\mu\y1,\etc,\mu-3=\alpha
$$
and
$$
\mu-2,\mu-1\y1,\etc,\mu-3
\searrow
\mu-1\y1,\etc,\mu-3
\stackrel{\ref{teorema2}(2.a)}\nearrow
\mu-1,\mu\y1,\etc,\mu-3
\searrow
\mu\y1,\etc,\mu-3=\alpha,
$$
each starting at the required~(\ref{arriba}). The result then follows from~(\ref{sgns}) and a straightforward analysis of incidence numbers in the two paths above: the first branch accounts in~(\ref{incidencealamorse}) with a negative sign, while the second branch does it with a positive sign. On the other hand, for $1\leq i\leq \mu-3$, the analysis of the tree $T_\alpha$ starts with its only two potential branches
\begin{align*}
\left.\begin{array}{r}
\beta_1:=\mu-1,\mu\y1,\etc,\mu-3\\
\beta_2:=\mu-2,\mu-1,\mu\y1,\etc,\widehat{i},\etc,\mu-3\\
\beta_3:=i,\mu-1,\mu\y1,\etc,\widehat{i},\etc,\mu-3
\end{array}\right\}
&\searrow
\mu-1,\mu\y1,\etc,\widehat{i},\etc,\mu-3\\
&\stackrel{\ref{teorema1}}
\nearrow
\mu-1,\mu\y1,\etc,\widehat{i},\etc,\mu-2\\&
\searrow
\mu\y1,\etc,\widehat{i},\etc,\mu-2=\alpha
\end{align*}
and
\begin{align*}
\left.\begin{array}{r}
\beta_1=\mu-1,\mu\y1,\etc,\mu-3\\
\beta_2=\mu-2,\mu-1,\mu\y1,\etc,\widehat{i},\etc,\mu-3\\
\beta_3=i,\mu-1,\mu\y1,\etc,\widehat{i},\etc,\mu-3
\end{array}\right\}
&\searrow
\mu-1,\mu\y1,\etc,\widehat{i},\etc,\mu-3\\
&\stackrel{\ref{teorema1}}
\nearrow
\mu-1,\mu\y1,\etc,\widehat{i},\etc,\mu-2\\&
\searrow
\mu-1\y1,\etc,\widehat{i},\etc,\mu-2\\&
\stackrel{\ref{teorema2}(2.b)}
\nearrow
i,\mu-1\y1,\etc,\widehat{i},\etc,\mu-2\\&
\searrow
i\y1,\etc,\widehat{i},\etc,\mu-2\\&
\stackrel{\ref{teorema2}(2.a)}
\nearrow
i,\mu\y1,\etc,\widehat{i},\etc,\mu-2\\&
\searrow
\mu\y1,\etc,\widehat{i},\etc,\mu-2=\alpha.
\end{align*}
But none of the two options involving $\beta_2$  leads to an actual path in $T_\alpha$, in view of item~\emph{(a)} in Lemma~\ref{down}. Likewise, none of the two options involving $\beta_3$ leads to an actual path in $T_\alpha$, for $\beta_3$ is redundant by Corollary~\ref{celdasredundantes}(\ref{teorema1}). As above, the result then follows from a straightforward analysis of incidence numbers: the two options involving~$\beta_1$ allow us to pair the actual paths in $T_\alpha$ in such a way that the two elements in each pair account in~(\ref{incidencealamorse}) with different signs. (It is easy to see that there is only one pair, but such a fact is immaterial for our argument.)

\smallskip
To complete the proof, we next deal with the slightly more involved case of a $(\mu-4)$-dimensional type-\emph{(vii)} critical face $\alpha=a\y1,\etc,\widehat{i},\etc,\widehat{a},\etc,\mu-1$, with $1\leq i< a\leq \mu-2$. As above, the analysis of the corresponding tree $T_\alpha$ starts with its only two potential branches
\begin{align*}
&\left.\begin{array}{r}
\beta_1:=i,a,\mu\y1,\etc,\widehat{i},\etc,\widehat{a},\etc,\mu-2\\
\beta_2:=a,\mu-1,\mu\y1,\etc,\widehat{i},\etc,\widehat{a},\etc,\mu-2\\
\beta_3:=a,\mu\y1,\etc,\widehat{a},\etc,\mu-2
\end{array}\right\}
\searrow
a,\mu\y1,\etc,\widehat{i},\etc,\widehat{a},\etc,\mu-2\\
&\hspace{5.3cm}\stackrel{\ref{teorema1}}
\nearrow
a,\mu\y1,\etc,\widehat{i},\etc,\widehat{a},\etc,\mu-1
\searrow
a\y1,\etc,\widehat{i},\etc,\widehat{a},\etc,\mu-1=\alpha
\end{align*}
and
\begin{align*}
&\left.\begin{array}{r}
\beta_1=i,a,\mu\y1,\etc,\widehat{i},\etc,\widehat{a},\etc,\mu-2\\
\beta'_2:=i,\mu-1,\mu\y1,\etc,\widehat{i},\etc,\widehat{a},\etc,\mu-2\\
\beta'_3:=i,\mu\y1,\etc,\widehat{i},\etc,\mu-2
\end{array}\right\}
\searrow
i,\mu\y1,\etc,\widehat{i},\etc,\widehat{a},\etc,\mu-2
\\&\hspace{5.1cm}
\stackrel{\ref{teorema1}}
\nearrow
i,\mu\y1,\etc,\widehat{i},\etc,\widehat{a},\etc,\mu-1
\searrow
i\y1,\etc,\widehat{i},\etc,\widehat{a},\etc,\mu-1\\&\hspace{4.8cm}
\stackrel{\ref{teorema2}(2.b)}
\nearrow
i,a\y1,\etc,\widehat{i},\etc,\widehat{a},\etc,\mu-1
\searrow
a\y1,\etc,\widehat{i},\etc,\widehat{a},\etc,\mu-1=\alpha.
\end{align*}
None of the two options involving $\beta_1$ leads to an actual path in $T_\alpha$, for $\beta_1$ is redundant by Corollary~\ref{celdasredundantes}(\ref{teorema1}). Likewise, none of the two options involving $\beta_2$ or $\beta'_2$ leads to an actual path in $T_\alpha$, in view of items~\emph{(b)} and~\emph{(c)} in Lemma~\ref{down}. This time, however, we can not use the pairing trick in the previous case, as $\beta_3\neq\beta'_3$. Instead, we continue ``one step further'' the analysis of $T_\alpha$ which, all in all, starts in fact with its only two potential (but now linear) branches 
\begin{align*}
&
\mu-1,\mu\y1,\etc,\widehat{a},\etc,\mu-2
\searrow
\mu-1\y1,\etc,\widehat{a},\etc,\mu-2\\&
\stackrel{\ref{teorema2}(2.b)}\nearrow
a,\mu-1\y1,\etc,\widehat{a},\etc,\mu-2
\searrow
a\y1,\etc,\widehat{a},\etc,\mu-2
\stackrel{\ref{teorema2}(2.a)}\nearrow
a,\mu\y1,\etc,\widehat{a},\etc,\mu-2
\\&\searrow
a,\mu\y1,\etc,\widehat{i},\etc,\widehat{a},\etc,\mu-2
\stackrel{\ref{teorema1}}
\nearrow
a,\mu\y1,\etc,\widehat{i},\etc,\widehat{a},\etc,\mu-1
\searrow
a\y1,\etc,\widehat{i},\etc,\widehat{a},\etc,\mu-1=\alpha
\end{align*}
and
\begin{align*}
&\mu-1,\mu\y1,\etc,\widehat{i},\etc,\mu-2
\searrow
\mu-1\y1,\etc,\widehat{i},\etc,\mu-2
\stackrel{\ref{teorema2}(2.b)}\nearrow
i,\mu-1\y1,\etc,\widehat{i},\etc,\mu-2\\&
\searrow
i\y1,\etc,\widehat{i},\etc,\mu-2
\stackrel{\ref{teorema2}(2.a)}\nearrow
i,\mu\y1,\etc,\widehat{i},\etc,\mu-2
\searrow
i,\mu\y1,\etc,\widehat{i},\etc,\widehat{a},\etc,\mu-2\\&
\stackrel{\ref{teorema1}}\nearrow
i,\mu\y1,\etc,\widehat{i},\etc,\widehat{a},\etc,\mu-1
\searrow
i\y1,\etc,\widehat{i},\etc,\widehat{a},\etc,\mu-1
\stackrel{\ref{teorema2}(2.b)}
\nearrow
i,a\y1,\etc,\widehat{i},\etc,\widehat{a},\etc,\mu-1\\&
\searrow
a\y1,\etc,\widehat{i},\etc,\widehat{a},\etc,\mu-1=\alpha.
\end{align*}
But Lemma~\ref{unica} implies that each of the two branches above is forced to encode a single and fully explicit path in $T_\alpha$. The conclusion then follows from a straightforward analysis of the resulting incidence numbers. Indeed, as the reader will readily check, the first branch encodes a path which accounts in~(\ref{incidencealamorse}) with a sign $(-1)^{i+a}$, while the second branch does it with sign $(-1)^{i+a+1}$.
\end{proof}


\bigskip

{\sc \ 

Departamento de Matem\'aticas

Centro de Investigaci\'on y de Estudios Avanzados del I.P.N.

Av.~Instituto Polit\'ecnico Nacional n\'umero~2508, San Pedro Zacatenco

M\'exico City 07000, M\'exico.}

\tt jesus@math.cinvestav.mx

\tt jgonzalez@math.cinvestav.mx

\end{document}